\documentclass[11pt]{article}
\usepackage{psfrag}
\usepackage{amssymb,bbm}
\usepackage{amsmath}
\usepackage{amsfonts}
\usepackage{pstricks}
\usepackage{graphicx}
\usepackage[latin5]{inputenc}
\usepackage{yfonts}
\usepackage{setspace}
\newcounter{theorem}
\newtheorem{theorem}{Theorem}

\newtheorem{lemma}{Lemma}

\newtheorem{example}{Example}

\newtheorem{remark}{Remark}
\newtheorem{definition}{Definition}

\newenvironment{proof}[1][Proof]{\textbf{#1.} }{\rule{0.5em}{0.5em}}

\oddsidemargin 0.5cm \evensidemargin 0cm \voffset -1cm \topmargin
0cm \headheight 0.5cm \headsep 1.5cm \textheight 22cm \textwidth
14.5cm \marginparwidth 0cm \marginparsep 0cm

\title{Graph-Directed Sprays and Their Tube Volumes via Functional Equations \date{}}
\markboth{Graph-Directed Sprays and Their Tube Volumes via Functional Equations}{Graph-Directed Sprays and Their Tube Volumes via Functional Equations}

\author{Derya ÇELİK  \footnote{Anadolu
University, Science Faculty, Department of Mathematics, 26470,
Eskişehir, Turkey, \newline e-mails: deryacelik@anadolu.edu.tr,
 skocak@anadolu.edu.tr, yunuso@anadolu.edu.tr,
aeureyen@anadolu.edu.tr}  \and Şahin KOÇAK $^*$ \thanks{Corresponding Author.}   \and Yunus ÖZDEMİR$^*$ \and A. Ersin
ÜREYEN $^*$}

\begin{document}

\maketitle \doublespacing \pagestyle{myheadings}

\begin{abstract}
 The notion of sprays introduced by Lapidus and his co-workers has proved useful in the context of fractal tube formulas. In the present note, we propose a more general concept of sprays, where we allow several generators to make them more convenient for applications to graph-directed fractals. Using a simple functional equation satisfied by the volume of the inner $\varepsilon$-neighborhood of such a generalized spray, we establish a tube formula for them.
\end{abstract}

\textbf{Keywords:}{ Tube formulas, graph-directed sprays, functional equations, Mellin transform.}

\textbf{MSC: }{Primary 28A80, 28A75; Secondary 52A38}

\section{Introduction}

Tube volumes for fractals has been an interesting topic of research in the last decade. \mbox{M. Lapidus} and his coworkers established several tube formulas as series in terms of residues of some associated zeta functions \cite{LaFraBook}. Recently, we have proposed a simple alternative approach to tube formulas for self-similar sprays via functional equations (\cite{DKOUJofG}). The so-called sprays introduced by Lapidus and Pomerance in \cite{LaPoCounter} (to be defined below) are a convenient notion in dealing with fractal tube volumes.

Beyond the self-similar fractals, the next family of interest is the class of graph-directed fractals. For tube volumes of graph-directed fractals there are formulas (see \cite{DDKUFractals}), which are analogous to the tube formulas of Lapidus and Pearse (\cite{LaPeActa}). So far as we know, the notion of spray has not been extended to the graph-directed setting. In the present note, we want to propose the notion of graph-directed sprays and establish a tube formula for them via functional equations.

We now first recall the notion of spray in Euclidean space. Let $G\subseteq \mathbb{R}^n$ be a bounded open set. A spray generated by $G$ is a collection $\mathcal{S}=(G_i)_{i\in \mathbb{N}}$ of pairwise disjoint open sets $G_i \subseteq \mathbb{R}^n$ such that $G_i$ is a scaled copy of $G$ by some $\lambda_i > 0 $; in other words, $G_i$ is congruent (i.e. isometric) to $\lambda_i G$. The sequence $(\lambda_i)_{i\in \mathbb{N}}$ is called the associated scaling sequence of the spray. In applications one has typically $\lambda_i < 1$ and often $\lambda_0 = 1$ so that $G_0$ is equal (or isometric) to $G$. Furthermore, it is meaningful to assume $\displaystyle \sum_{i\in \mathbb{N}}\lambda_i^n< \infty$ to make the volume of $\displaystyle \bigcup_{i\in \mathbb{N}}G_i$ finite, as one deals with the inner tube of $\displaystyle \bigcup_{i\in \mathbb{N}}G_i$ (the inner $\varepsilon$-tube of an open set $A\subseteq \mathbb{R}^n$ is the set of points of $A$ with distance less than $\varepsilon$ to the boundary of $A$).

The most important class of sprays is the self-similar sprays, for which the scaling sequence is of a very special type.

Let $\{r_1, r_2, \ldots , r_J\}$ be a so-called ratio list (i.e. $0 < r_j < 1$ for $j=1,2,\ldots, J$) and consider the formal expression

\begin{equation}\label{moranseries}
\frac{1}{1-(r_1+r_2+\cdots +r_J)}=\sum_{k=0}^\infty \ \sum_{k_1+\cdots +k_J=k}r_1^{k_1}\, r_2^{k_2}\, \ldots  \, r_J^{k_J}.
\end{equation}
   If the scaling sequence $(\lambda_i)_{i\in \mathbb{N}}$  of a spray is given by the terms of the series on the right-hand side of (\ref{moranseries}) for an appropriate ratio list, then the spray is called a self-similar spray.

To give a flavour of tube formulas, we note the following theorem (\cite{LaPeActa}, \cite{LPW}, \cite{DKOUIntelligencer}).

\begin{theorem}
   Let $(G_i)_{i\in \mathbb{N}}$ be a self-similar spray generated by $G\subseteq \mathbb{R}^n$ with a scaling sequence associated with a ratio list $\{r_1, r_2, \ldots , r_J\}$. Assume $G\subseteq \mathbb{R}^n$ to be monophase, i.e. let its inner $\varepsilon$-tube volume function $V_G(\varepsilon)$ be given by
     \begin{eqnarray}
     V_G(\varepsilon)&=& \left\{ \begin{array}{ccc}
     \displaystyle \sum_{i=0}^{n-1} \kappa_i \varepsilon^{n-i} &, & 0\leq \varepsilon \leq g \\
     Vol(G)&,& \varepsilon \geq g \ ,
     \end{array}\right.
   \end{eqnarray}
 where $g$ is the inradius of $G$ (i.e. the supremum of the radii of the balls contained in $G$). Then the volume $V_{\cup G_i}(\varepsilon)$ of the inner $\varepsilon$-tube of $\cup G_i$ is given by the formula
 \[V_{\cup G_i} (\varepsilon)=\sum_{\omega \in \mathfrak{D} \cup \{0,1,2,\dots,n-1\}} {\rm res}(\zeta(s) \, \varepsilon^{n-s};\omega) \quad , \quad \text{ for } \varepsilon < g \, , \]
 where $\zeta(s)=\frac{1}{1-(r_1^s+\cdots + r_J^s)}\displaystyle\sum_{i=0}^n \frac{g^{s-i}}{s-i} \, \kappa_i$ \ (with $\kappa_n=-Vol(G)$) and $\mathfrak{D}$ is the set of complex roots of the equation $r_1^s+\cdots + r_J^s=1$.
\end{theorem}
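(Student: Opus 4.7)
The plan is to derive a functional equation for $V := V_{\cup G_i}$ from the recursive structure of the scaling sequence, convert it to an algebraic equation via a Mellin transform, and then recover $V$ by residue calculus, following the approach of \cite{DKOUJofG} in the single-generator case. The Moran-series identity
\[\frac{1}{1-(r_1+\cdots+r_J)} \;=\; 1 + \sum_{j=1}^J r_j\cdot\frac{1}{1-(r_1+\cdots+r_J)}\]
shows that the multiset of scales $(\lambda_i)_{i\in\mathbb{N}}$ is the disjoint union of $\{1\}$ and of the $J$ rescaled copies $\{r_j\lambda_i\}_{i\in\mathbb{N}}$. Since the $G_i$ are pairwise disjoint and a $\lambda$-scaled copy of $G$ has inner tube volume $\lambda^n V_G(\varepsilon/\lambda)$, summing over $i$ yields the functional equation
\begin{equation}\label{FE}
V(\varepsilon) \;=\; V_G(\varepsilon) + \sum_{j=1}^J r_j^n\, V(\varepsilon/r_j).
\end{equation}

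Next, I apply the shifted Mellin transform $\tilde f(s):=\int_0^\infty f(\varepsilon)\,\varepsilon^{s-n-1}\,d\varepsilon$, under which $r_j^n\, V(\,\cdot\,/r_j)$ turns into $r_j^s\,\tilde V(s)$. Transforming \eqref{FE} then gives the algebraic identity $(1-\sum_j r_j^s)\,\tilde V(s)=\tilde V_G(s)$. Using the monophase formula for $V_G$ together with the tail $\int_g^\infty \mathrm{Vol}(G)\,\varepsilon^{s-n-1}\,d\varepsilon$ (convergent for $\mathrm{Re}(s)<n$ and producing the term indexed by $i=n$ with $\kappa_n=-\mathrm{Vol}(G)$), a direct computation yields $\tilde V_G(s)=\sum_{i=0}^n \kappa_i\,g^{s-i}/(s-i)$. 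Hence $\tilde V(s)=\zeta(s)$ on a vertical strip inside $\{n-1<\mathrm{Re}(s)<n\}$ that avoids the zeros of $1-\sum_j r_j^s$.

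Inverting the Mellin transform yields
\[V(\varepsilon) \;=\; \frac{1}{2\pi i}\int_{c-i\infty}^{c+i\infty}\zeta(s)\,\varepsilon^{n-s}\,ds\]
for $c$ in the above strip. Pushing the contour to the left sweeps precisely the simple poles of $\zeta(s)\,\varepsilon^{n-s}$ at $s=0,1,\ldots,n-1$ (from the factors $1/(s-i)$) and at the complex Moran roots $\omega\in\mathfrak{D}$ (from the zeros of $1-\sum_j r_j^s$). The pole at $s=n$ is not swept, because $c<n$, which accounts for its absence from the index set of the theorem; and the restriction $\varepsilon<g$ is exactly what guarantees that $\zeta(s)\,\varepsilon^{n-s}\sim(\varepsilon/g)^{-s}$ decays as $\mathrm{Re}(s)\to-\infty$, permitting the sweep.

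The main obstacle is the rigorous justification of this contour shift: one needs uniform bounds on $1/(1-\sum_j r_j^s)$ along vertical lines away from the Moran zeros, so that the horizontal pieces of the moving contour contribute negligibly in the limit, together with a counting argument for those zeros in vertical strips ensuring convergence of the residue series to $V(\varepsilon)$. This is the analytic heart of the argument; once it is in place, the rest of the proof is essentially bookkeeping of the residues.
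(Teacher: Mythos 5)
Your outline follows exactly the route the paper itself takes: the paper quotes this theorem from the literature, but it proves the graph-directed generalization (Theorem~\ref{maintheorem}) by precisely this functional-equation/Mellin-transform/residue scheme, of which the present statement is the one-node case. Your derivation of the functional equation from the Moran identity, the computation of $\mathcal{M}\left(V_G(\varepsilon)/\varepsilon^n\right)(s)=\sum_{i=0}^n\kappa_i\,g^{s-i}/(s-i)$, and the identification of the poles are all correct. However, two analytic steps that constitute essentially all of the paper's actual proof are missing from your proposal, and one of them you do not even flag.

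First, your ``Hence $\tilde V(s)=\zeta(s)$ on a vertical strip'' presupposes that the Mellin transform of $V$ exists somewhere, and this is not automatic: convergence of $\int_0^1 V(\varepsilon)\,\varepsilon^{s-n-1}\,d\varepsilon$ for $\mathrm{Re}(s)=c>D$ requires the a priori estimate $V(\varepsilon)=O(\varepsilon^{n-D})$ as $\varepsilon\to 0$, where $D$ is the real Moran root. The algebraic identity $\left(1-\sum_j r_j^s\right)\tilde V(s)=\tilde V_G(s)$ cannot be used to \emph{define} $\tilde V$; one must first know the integral converges, and the paper devotes a renewal-type argument (Lemmas~\ref{lemmarenewal} and~\ref{lemmaestimate}) to exactly this point. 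Second, the contour shift that you correctly call ``the analytic heart'' is where the real work lies: since $1-\sum_j r_j^s$ has infinitely many zeros in a vertical strip, one needs (a) a sequence of heights $\tau_j\to\infty$ along which $\left|1-\sum_j r_j^s\right|$ is uniformly bounded below, which the paper obtains from the Dirichlet lemma on simultaneous Diophantine approximation (Lemma~\ref{dirichletlemma}), and (b) control of the left edge of the moving rectangle, which the paper replaces by a large semicircle and estimates with Jordan's lemma; it is this last estimate, where the integrand behaves like $(g/\varepsilon)^{\mathrm{Re}(s)}$, that produces the hypothesis $\varepsilon<g$. Your heuristic for the origin of the condition $\varepsilon<g$ is correct, but without these two ingredients the residue series is not shown to converge to $V(\varepsilon)$, so the proposal as written is an accurate plan rather than a proof.
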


The motivation behind the notion of the self-similar spray is that they naturally emerge as the ``hollow spaces" in self-similar fractals. For example, if you start with an interval, and construct a Cantor set by deleting successively the open middle thirds of the intervals, the collection of deleted open intervals constitute a $1-$dimensional self-similar spray with a scaling sequence associated with the ratio list $\{\frac{1}{3},\frac{1}{3}\}$, the generator being the first deleted middle third. Likewise, if you start with a triangle and successively delete the open middle fourths of the triangles to obtain in the end a Sierpinski Gasket, then the collection of the deleted open triangles constitute a $2-$dimensional self-similar spray with a scaling sequence associated with the ratio list $\{\frac{1}{2},\frac{1}{2},\frac{1}{2}\}$, the generator being again the first deleted middle fourth.

One can in principle allow any scaling sequence for a spray, but to obtain manageable tube formulas some sort of  restrictions seem (as to yet) to be necessary. To be associated with a ratio list is, for example, such a condition. A weaker condition (called ``subshift of finite-type") was formulated to handle the hollow spaces of graph-directed fractals in \cite{DKOUIntelligencer}, but a more natural approach demands a consideration of sprays with more than one generator, since the hollow spaces of graph-directed fractals are composed of copies of several generators (see Figures~\ref{attractors}-\ref{hollowspaces}).

In Section 2 we propose this more general concept of sprays with several generators. In Section 3 we consider a natural functional equation for inner tube volumes of graph-directed sprays, formulate a multi-dimensional renewal lemma to handle it and establish an inner tube formula for graph-directed sprays as our main result (Theorem~\ref{maintheorem}). In Section 4 we give the proof, taking into account the additional difficulties arising from the presence of the Mauldin-Williams matrix. We give also an explicit sufficient region for $\varepsilon$ on which the main theorem holds.

\section{Graph-Directed Sprays}

Let $\mathcal{G}=(V,E,r)$ be a weighted directed graph with weights $r:E\rightarrow (0,1)$. For an edge $e\in E$, we denote the initial vertex of $e$ by $i(e)$ and the terminal vertex by $t(e)$. For vertices $u,v \in V$, we denote the set of edges from $u$ to $v$ by $E_{uv}$ and the set of edges starting from $u$ by $E_u$. If $E _u \neq \emptyset$ for all $u\in V$, such a graph is called a Mauldin-Williams graph. If any two vertices $u$ and $v$ can be joined by a (directed) path, then the graph is said to be strongly connected. We will generally assume that the Mauldin-Williams graphs be strongly connected.

We define the weight of a path $\alpha=e_1e_2\cdots e_k$ by $r(\alpha)=r(e_1)\cdot r(e_2)\cdot \ldots \cdot r(e_k)$. $\alpha$ is called a path from the vertex $u$ to $v$ if $i(e_1)=u$ and $t(e_k)=v$. We also write $i(\alpha)=u$ and $t(\alpha)=v$. We assign an empty path $\phi_u$ to every vertex $u$ with weight $r(\phi_u)=1$.

Now we define graph-directed sprays.

\begin{definition}\label{DEFgraphdirectedspray}
Let $\mathcal{G}=(V,E,r)$ be a Mauldin-Williams graph and $G_u\, (u \in V)$ be bounded open sets in $\mathbb{R}^n$. A graph-directed spray $\mathcal{S}$ associated with $\mathcal{G}$ and generated by the open sets $G_u\, (u \in V)$ is a collection of pairwise disjoint open sets $G_{\alpha}$ in $\mathbb{R}^n$ (where $\alpha$ is a path in the graph), such that $G_{\alpha}$ is a scaled isometric copy of $G_{t(\alpha)}$ with scaling ratio $r(\alpha)$.
\end{definition}

\begin{remark}
Note that  $G_u$, with $u \in V$, is a generator and $G_\alpha$, where $\alpha$ denotes a path, is a copy of a generator. If $\alpha=\phi_u$ then $G_{\phi_u}$ is a scaled isometric copy of $G_{t(\phi_u)}=G_u$ with scaling ratio $r(\phi_u)=1$, i.e. $G_{\phi_u}$ is an isometric copy of $G_u$.
\end{remark}

\begin{remark}
Note that if $\mathcal{G}$ has only one node this notion reduces to the ordinary notion of a self-similar spray generated by a single open set with a scaling sequence associated with the ratio list consisting of the weights of the loops around the single vertex.
\end{remark}

The spray $\mathcal{S}$ can naturally be decomposed into subcollections \[\mathcal{S}_u=\{G_{\alpha} \, | \, i(\alpha)=u \}.\]
Notice that the subcollection  $\mathcal{S}_u$ is also composed of pairwise disjoint scaled copies of all generators $G_v$ with scaling ratios $r(\alpha)$ for paths $\alpha$ starting at $u$. This decomposition of $\mathcal{S}$ into the subcollections $\mathcal{S}_u$ will prove useful in establishing tube formulas.

The motivation for the definition of graph-directed sprays comes, in analogy to the motivation of self-similar sprays, from hollow spaces of graph-directed fractals. Let us very briefly recall the notion of graph-directed fractals.

Let $\mathcal{G}=(V,E,r)$ be a Mauldin-Williams graph, $(A_u)_{u \in V}$ be a list of complete subsets of $\mathbb{R}^n$ and let $f_e:A_{t(e)}\to A_{i(e)}$ be similarities with similarity ratios $r(e)$. Such an assignment is called a realization of the graph $\mathcal{G}$ in $\mathbb{R}^n$. Given such a realization,  there is a unique list $(K_u)_{u\in V}$ of nonempty compact sets with $K_u \subset A_u \, (u\in V)$ satisfying
\[K_u=\bigcup_{v\in V} \bigcup_{e\in E_{uv}} f_e(K_v)\]
for all $u\in V$ (\cite{EdgarBook}).

In favorable cases the maps $f_e$ can be restricted to the convex hull of the graph-directed ``attractors" $K_{t(e)}$ and these attractors can be imagined to be formed by deleting successively pieces of the convex hull analogous to the construction of the Cantor set or the Sierpinski Gasket by deleting successively pieces of an interval or a triangle. The collection of deleted open pieces will constitute a graph-directed spray in the above defined sense, with so many generators as there are nodes of the graph. Before making this idea precise, it will be best to study an example.

\begin{example}\label{detailedexample}
Consider the Mauldin-Williams graph with $V=\{1,2\}$, with $9$ edges and the corresponding weights as shown in Figure~\ref{graph}.
\begin{figure}[h!]
   {\centering
    \includegraphics[width=0.6\textwidth]{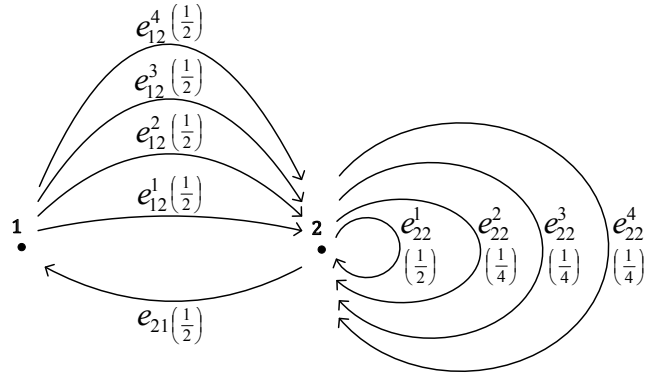}
    \caption{ A Mauldin-Williams graph with 2 nodes and 9 edges (the weights are shown in parenthesis).} \label{graph}
   }
\end{figure}

Let $A_1$ and $A_2$ be the square and the triangle in $\mathbb{R}^2$ as shown in Figure~\ref{similitudes}~(a). The similarities associated with the edges are indicated in Figure~\ref{similitudes}~(b). The graph-directed fractals $K_1$ and $K_2$ of the system are shown in Figure~\ref{attractors}. $(A_1\setminus K_1) \cup (A_2\setminus K_2)$ is a collection of connected open sets which constitute a graph-directed spray with generators $G_1$ and $G_2$ satisfying Definition~\ref{DEFgraphdirectedspray} (see Figures~\ref{generators}-\ref{hollowspaces}). The generators $G_1$ and $G_2$ thereby are defined by $\displaystyle G_u=A_u^{\circ}\setminus \bigcup_{e\in E_u} f_e(A_{t(e)})$ for $u=1,2$.

\begin{figure}[h!]
   {\centering
    \includegraphics[width=0.60\textwidth]{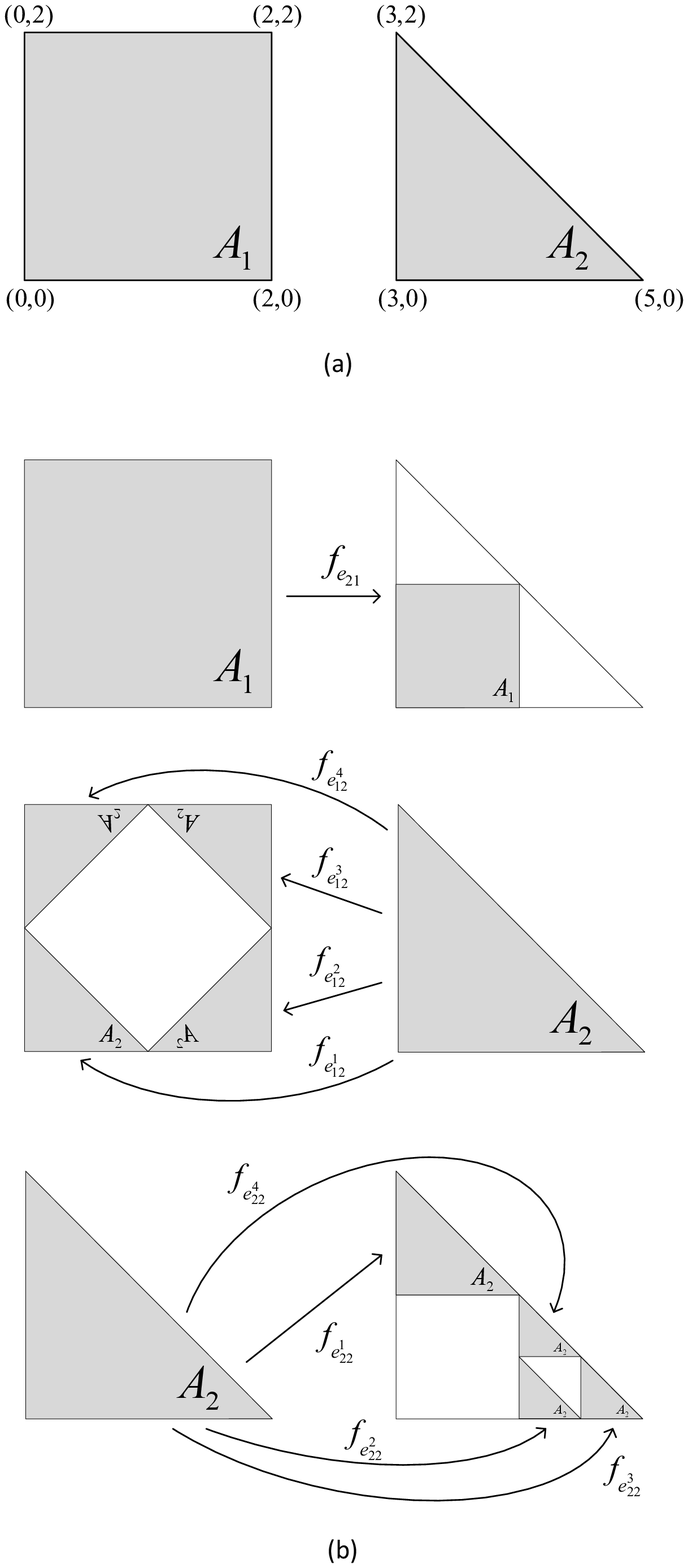}
    \caption{ A realization of the Mauldin-Williams graph of Example~\ref{detailedexample} (shown in \mbox{Figure~\ref{graph}}). (a): The complete spaces associated with the 2 nodes. (b): The similarities associated with the edges.} \label{similitudes}
   }
\end{figure}

\begin{figure}[h!]
   {\centering
    \includegraphics[width=0.65\textwidth]{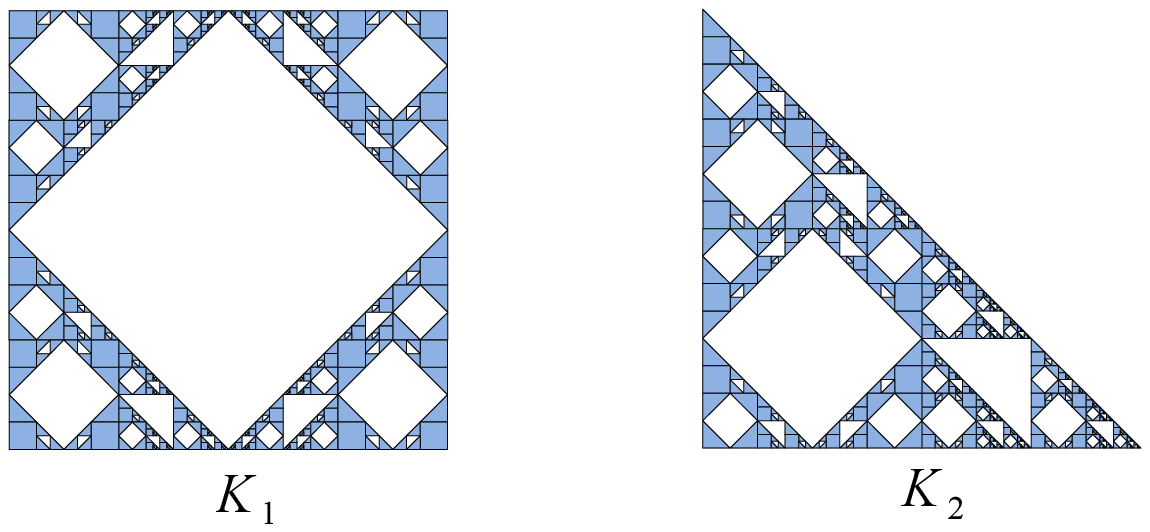}
    \caption{ The attractors of the realization shown in Figure~\ref{similitudes} of the Mauldin-Williams graph of Example~\ref{detailedexample}. } \label{attractors}
   }
\end{figure}

\begin{figure}[h!]
   {\centering
    \includegraphics[width=0.65\textwidth]{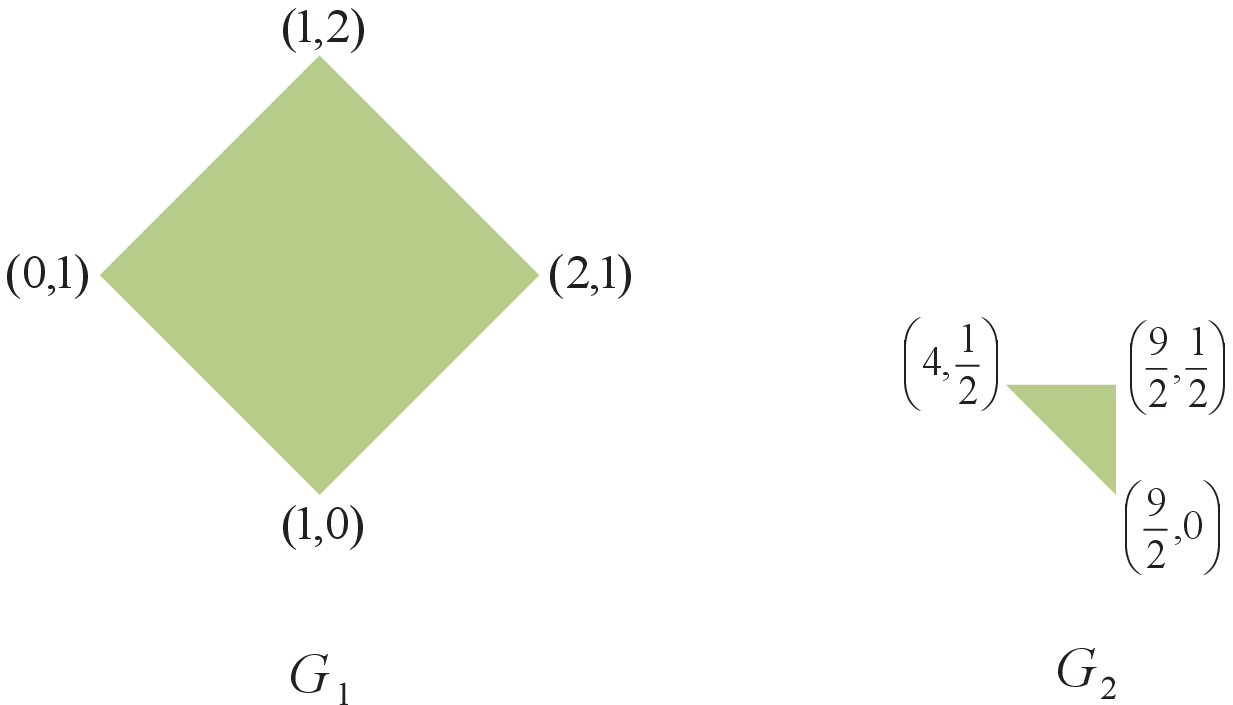}
    \caption{ The generators of the realization shown in Figure~\ref{similitudes} of the Mauldin-Williams graph of Example~\ref{detailedexample}. } \label{generators}
   }
\end{figure}

\begin{figure}[h!]
   {\centering
    \includegraphics[width=0.65\textwidth]{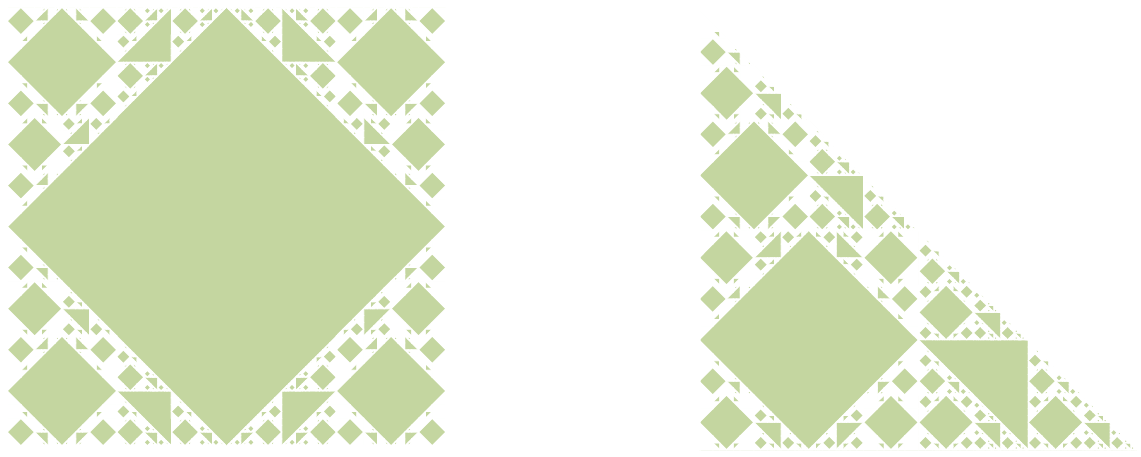}
    \caption{ The hollow spaces of the realization shown in Figure~\ref{similitudes} of the Mauldin-Williams graph of Example~\ref{detailedexample} ($\mathcal{S}_1$ left, $\mathcal{S}_2$ right).} \label{hollowspaces}
   }
\end{figure}

Note that the subcollections $\mathcal{S}_1=\{G_\alpha \, | \, i(\alpha)=1\}$ and $\mathcal{S}_2=\{G_\alpha \, | \, i(\alpha)=2\}$ contain copies of both of $G_1$ and $G_2$ so that we can not view either of them as a spray in the classical sense.

To illustrate the formation of $G_\alpha$ for a path $\alpha$, we give several examples in \mbox{Figures~\ref{exsparys1}}.

\clearpage

\begin{figure}[h!]
   {\centering
    \includegraphics[width=1\textwidth]{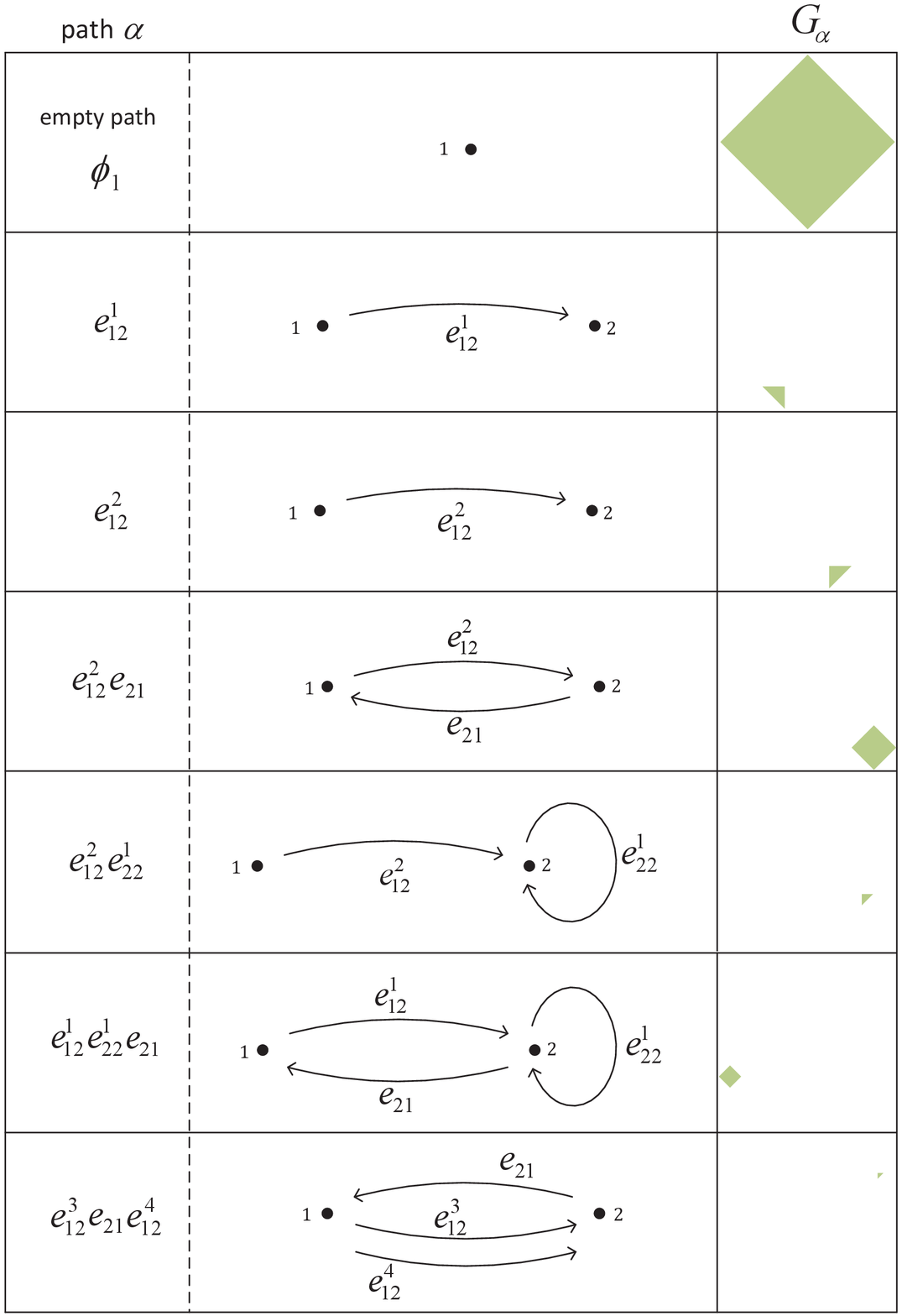}
    \caption{ A few examples of actions of paths on generators for the realization shown in Figure~\ref{similitudes} of the Mauldin-Williams graph of Example~\ref{detailedexample}.} \label{exsparys1}
   }
\end{figure}
\end{example}

\clearpage

From the point of view of our present concern to establish inner tube formulas for graph-directed sprays, the special positions of the scaled copies of the generators are not important as long as they are pairwise disjoint. But if one wishes to compute tube volumes of fractals, one should be careful in relating the tube of the fractal to the inner tube of an associated spray. In the above example the $\varepsilon$-tube volume of $K_1$ can be expressed as the sum of the inner $\varepsilon$-tube volume of the collection $\mathcal{S}_1$ and the outer $\varepsilon$-tube of the square $A_1$ (likewise for $K_2$) (see Figure~\ref{ex1tubes}). But this happy relationship does not hold always as the following example shows.

\begin{figure}[h!]
   {\centering
    \includegraphics[width=0.60\textwidth]{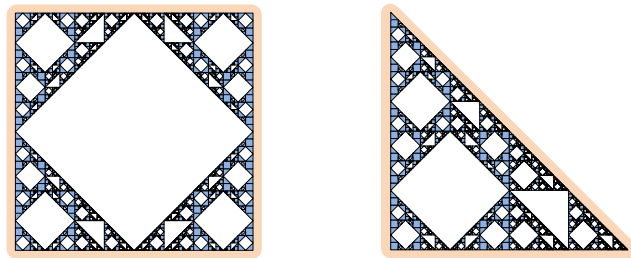}
    \caption{ The $\varepsilon$-tubes of the attractors of Example~\ref{detailedexample} as union of the inner $\varepsilon-$tube of the hollow spaces and the outer $\varepsilon-$tube of the convex hull of the attractor.} \label{ex1tubes}
   }
\end{figure}

\begin{example}\label{exampledifferent}
As the Mauldin-Williams graph we choose the same graph in Figure~\ref{graph} with the only difference that we delete the edge $e_{22}^4$. We choose the same realization of this graph in $\mathbb{R}^2$ discarding the map corresponding to $e_{22}^4$. The emerging graph-directed fractals $L_1$ and $L_2$ and the corresponding graph-directed spray with generators $H_1$ and $H_2$ satisfying Definition~\ref{DEFgraphdirectedspray} are shown in Figures~\ref{attractors2}-\ref{hollowspaces2}.

\begin{figure}[h!]
   {\centering
    \includegraphics[width=0.60\textwidth]{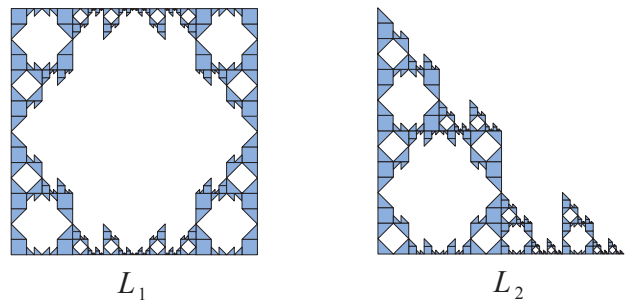}
    \caption{ Attractors of Example~\ref{exampledifferent}} \label{attractors2}
   }
\end{figure}
\begin{figure}[h!]
   {\centering
    \includegraphics[width=0.60\textwidth]{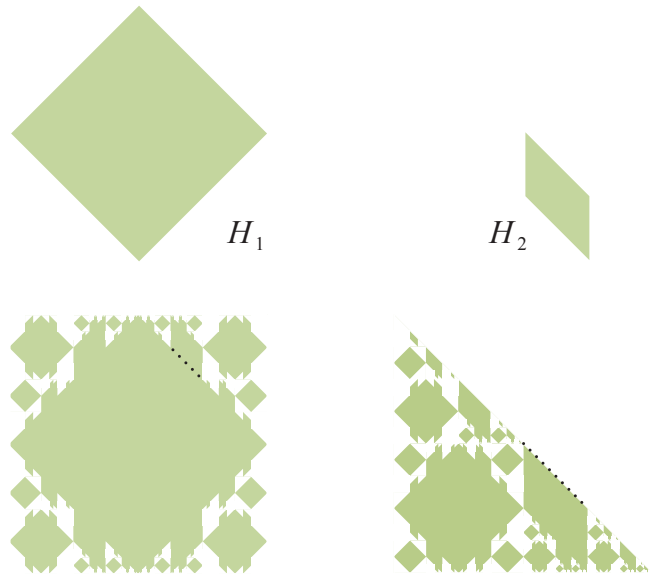}
    \caption{ Generators and the hollow spaces of Example~\ref{exampledifferent} showing that the $\varepsilon-$tube of the attractors needn't be the union of the inner $\varepsilon-$tube of the hollow space and the outer $\varepsilon-$tube of the convex hull of the attractor. } \label{hollowspaces2}
   }
\end{figure}

In this example the $\varepsilon$-tube volume of the graph-directed fractals can not be meaningfully related to the inner $\varepsilon$-tube volume of the graph-directed spray. Two types of problematic boundaries of generator copies are indicated in Figure~\ref{hollowspaces2} by dotted lines.
\end{example}

The simple relationship between the fractal tube volume and the inner spray volume observed in Example~\ref{detailedexample} still remains true for a more general class of graph-directed systems, if the following assumptions hold:

\begin{enumerate}
  \item [i)] ${\rm dim} (C_u)=n$, where $C_u$ is the convex hull $[K_u]$ of $K_u$.

  \item [ii)] (Tileset condition) The open set condition should be satisfied with $O_u=C_u^{\circ}$. We recall that (see \cite{EdgarBook}) the graph-directed system satisfies the open set condition if there exists a list $(O_u)_{u\in V}$ of open sets $O_u \subset \mathbb{R}^n$ such that,
      \begin{enumerate}
      \item for any $e\in E_{uv}$, $f_e(O_v)\subset O_u$,
      \item for any two distinct $e_1,e_2 \in E_u$, $f_{e_1}(O_{t(e_1)})\cap f_{e_2}(O_{t(e_2)})=\emptyset$.
      \end{enumerate}
  \item [iii)] (Nontriviality condition) $\displaystyle C_u^{\circ}\nsubseteq \bigcup_{e\in E_u} f_e(C_{t(e)})$.
  \item [iv)] (Pearse-Winter condition, \cite{PW}) $\partial C_u \subset K_u$.
\end{enumerate}

Now, if we define $\displaystyle G_u=C_u^{\circ} \setminus \bigcup_{e\in E_u} f_e(C_{t(e)})$ then we get a graph-directed spray $\mathcal{S}$ generated by the open sets $(G_u)_{u\in V}$ with $G_\alpha=G_{e_1e_2\cdots e_k}=f_{e_1} f_{e_2}\cdots f_{e_k}(G_{t(e_k)})$ for a path $\alpha$ in the graph $\mathcal{G}$.

Under the above conditions one can compute the $\varepsilon$-tube volume of the graph-directed fractals with the help of the inner tube volume of the graph-directed spray as in Example~\ref{detailedexample}. So we now consider the inner tube volumes for graph-directed sprays in the next section.

\section{Inner Tube Volumes of Graph-Directed Sprays via Functional Equations}

Let $\mathcal{G}=(V,E,r)$ be a Mauldin-Williams graph, $(G_u)_{u\in V}$ bounded open sets in $\mathbb{R}^n$ and $\mathcal{S}$ be a graph-directed spray associated with $\mathcal{G}$ and generated by the open sets $(G_u)_{u\in V}$. Let $\mathcal{S}_u$ be the subcollection of the spray $\mathcal{S}$ corresponding to the paths with initial vertex $u\in V$.

The volume of the inner $\varepsilon$-tube of the collection $\mathcal{S}_u$ satisfies the following functional equation (for all $u\in V$) as one can easily verify:
\begin{equation}\label{functionalequationgenel}
  V_{\mathcal{S}_u}(\varepsilon)=\sum_{v\in V}\sum_{e\in E_{uv}} r_e^n \, V_{\mathcal{S}_v}(\varepsilon/r_e)+ V_{G_u}(\varepsilon).
\end{equation}

Our strategy will be, as in the self-similar case (\cite{DKOUJofG}), to apply the Mellin transform to this functional equation and then try to recover the volume function by applying the inverse Mellin transform. To apply the Mellin transform we need an estimate of  $V_{\mathcal{S}_u}(\varepsilon)$ as $\varepsilon \to 0$. We now formulate a multi-dimensional renewal lemma which will enable us to find such an estimate (For the one-dimensional renewal lemma see \cite{LeVas}).

We recall that for a strongly connected Mauldin-Williams graph the spectral radius of the matrix $A(s)=\left[a_{uv}(s)\right]_{u,v \in V}$ with \mbox{$\displaystyle a_{uv}(s)=\sum_{e\in E_{uv}} r_e^s$} takes the value $1$ for a unique $s_0 \geq 0$, which is called the sim-value of the graph and which we denote by $D$ below (see \cite{EdgarBook}). We will always assume $D<n$.

\begin{remark}
The assumption $D<n$ is in fact equivalent to the condition that the total volume of the graph-directed spray is finite. To see this, one can easily verify that the volumes of the subcollections $\mathcal{S}_u$ can be expressed as follows:
\[[Vol(\mathcal{S}_u)]_{u\in V}=(I+A(n)+A^2(n)+\cdots) \, [Vol(G_u)]_{u \in V},\]
where $[Vol(\mathcal{S}_u)]_{u\in V}$ is a column vector. Note that, the matrix power $A^k(n)$ codes the contribution of paths of length $k$ to the total spray volume.

One can now prove the above claim taking into account that the spectral radius of $A(s)$ is a strictly decreasing function for $s\geq 0$ \cite{EdgarBook}, and for any matrix $A$, each entry of $A^k$ tends to $0$ as $k \to \infty$ if and only if the spectral radius is less than $1$ \cite[Theorem~5.6.12]{HornMatrixBook}.
\end{remark}

\begin{lemma}\label{lemmarenewal}
Let $\mathcal{G}=(V,E,r)$ be a strongly connected Mauldin-Williams graph and
\begin{equation}\label{renewaleq}
h_u(x)= \sum_{v\in V}\sum_{e \in E_{uv}} r_e^D \, h_{v}(x-\log(1/r_e)) \ + \ \psi_u(x) \qquad (u\in V),
\end{equation}
be a system of renewal equations on $\mathbb{R}$, where $D$ is the sim-value of the graph. Assume $\psi_u(x)=O(e^{-\tau |x|})$ for some $\tau>0$. Let $(h_u)_{u\in V}$ be a solution of this system of renewal equations. If $h_u$ tends to $0$ for $x\to -\infty$ for all $u\in V$, then $h_u$ is bounded (for all $u\in V$).
\end{lemma}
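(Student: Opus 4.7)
The approach is to iterate the renewal equation, express the solution as a Neumann-type series indexed by paths of the graph, and then bound this series uniformly in $x$ using the exponential decay of $\psi_u$ together with a Markov renewal estimate at the sim-value $D$ of the graph.

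Writing the system in operator form as $h = T h + \psi$ (vector-valued in $u\in V$) with
\begin{equation*}
  (T h)_u(x) = \sum_{v\in V}\sum_{e\in E_{uv}} r_e^D\, h_v\bigl(x-\log(1/r_e)\bigr),
\end{equation*}
iteration yields $h(x) = \sum_{k=0}^{N-1}(T^k\psi)(x) + (T^N h)(x)$, where an easy induction gives
\begin{equation*}
  (T^k\psi)_u(x) = \sum_{\substack{\alpha:\ i(\alpha)=u\\ |\alpha|=k}} r(\alpha)^D\, \psi_{t(\alpha)}\bigl(x - \log(1/r(\alpha))\bigr).
\end{equation*}
I would dispose of the remainder as follows: because $r_e<1$ for every edge we have $\log(1/r(\alpha)) \ge N\log(1/r_{\max})$ for every path $\alpha$ of length $N$, and this lower bound tends to $\infty$ with $N$, so the arguments $x-\log(1/r(\alpha))$ drift uniformly to $-\infty$; combined with the hypothesis that $h_v(y)\to 0$ at $-\infty$ and the uniform boundedness of the entries of $A^N(D)$ (Perron-Frobenius applied to the irreducible matrix $A(D)$, which has spectral radius $1$ by definition of $D$), this gives $(T^N h)(x)\to 0$ pointwise. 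Hence the solution admits the closed form
\begin{equation*}
  h_u(x) = \sum_{\alpha:\ i(\alpha)=u} r(\alpha)^D\, \psi_{t(\alpha)}\bigl(x-\log(1/r(\alpha))\bigr),
\end{equation*}
the sum ranging over all finite paths (including the empty path $\phi_u$) starting at $u$.

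The substantive task is now to bound this series uniformly in $x\in\mathbb{R}$. Using $|\psi_v(y)|\le C e^{-\tau|y|}$ and setting $s_\alpha := \log(1/r(\alpha))$, it is enough to establish the per-unit-interval Markov renewal estimate
\begin{equation*}
  \sum_{\substack{\alpha:\ i(\alpha)=u\\ s_\alpha\in[n,n+1)}} r(\alpha)^D \ \le\ B \qquad (n\ge 0,\ u\in V),
\end{equation*}
with $B$ independent of $n$ and $u$. Granting this, the series $\sum_\alpha r(\alpha)^D e^{-\tau|x-s_\alpha|}$ is dominated by $B e^\tau \sum_{n\ge 0} e^{-\tau|x-n|}$, which is uniformly bounded in $x$, so $|h_u(x)|$ is bounded and the proof is complete. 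The main obstacle is precisely this renewal estimate: since the spectral radius of $A(D)$ is exactly $1$ there is no geometric decay of $A^k(D)$, and extra care is needed in the lattice case when the $\log(1/r_e)$ generate a discrete subgroup of $\mathbb{R}$. Nevertheless, strong connectedness together with the analytic structure of $s\mapsto A(s)$ at $s=D$ yields the uniform per-unit-interval bound via a Markov renewal argument in the spirit of \cite{LeVas}, after which the rest of the argument is routine.
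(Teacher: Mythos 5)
Your Neumann-series decomposition and the treatment of the remainder $(T^Nh)(x)$ are workable (the row sums of $A^N(D)$ stay bounded because $A^N(D)p=p$ for the positive Perron--Frobenius eigenvector $p$, and $s_\alpha\ge N\log(1/r_{\max})\to\infty$ forces the arguments to $-\infty$ where $h_v$ vanishes). But there is a genuine gap exactly where you flag ``the main obstacle'': the per-unit-interval estimate $\sum_{\alpha:\,i(\alpha)=u,\ s_\alpha\in[n,n+1)}r(\alpha)^D\le B$ is asserted, not proved, and it does not follow from anything you have written. It is not a soft consequence of the boundedness of $A^k(D)$: a path with $s_\alpha\in[n,n+1)$ can have any length $k$ up to roughly $(n+1)/\gamma$ with $\gamma=\min_e\log(1/r_e)$, and summing the uniform bound on $\sum_{|\alpha|=k}r(\alpha)^D$ over those $k$ gives only $O(n)$, not $O(1)$. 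Appealing to ``a Markov renewal argument in the spirit of \cite{LeVas}'' is precisely the step that needs to be supplied; as written you have reduced the lemma to another unproved statement of comparable depth. The estimate is true and can be obtained elementarily: set $\tilde p(e)=r_e^Dp_{t(e)}/p_{i(e)}$, which defines genuine transition probabilities since $\sum_{e\in E_u}\tilde p(e)=1$ by the eigenvector identity; then $r(\alpha)^D=\tilde p(\alpha)\,p_{i(\alpha)}/p_{t(\alpha)}$, and since every increment $\log(1/r_e)\ge\gamma>0$, at most $1/\gamma+1$ partial sums of any trajectory can land in a unit interval, giving $B=(\max_up_u/\min_up_u)(1/\gamma+1)$. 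Your worry about the lattice case is a red herring here: non-lattice hypotheses matter for the convergence statement of the renewal theorem, not for this boundedness.

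Note also that the paper bypasses all of this with a much shorter argument that never expands the solution as a series. It takes the positive eigenvector $p$ of $A(D)$, uses the hypothesis $h_u\to0$ at $-\infty$ to secure $|h_u(x)|\le p_u$ on some $(-\infty,x_0]$, and then propagates this comparison forward over successive windows $[x_0+k\gamma,x_0+(k+1)\gamma]$ directly from the renewal equation, accumulating the error $p_uM\sum_k\sup e^{-\tau|x|}$, which converges. If you wish to keep your route you must actually prove the renewal estimate (for instance as sketched above); otherwise the eigenvector-comparison induction is the more economical path.
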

\begin{proof}
Let $\displaystyle \gamma=\min_{e\in E}\left\{\log \frac{1}{r_e}\right\}$.

Since the Mauldin-Williams graph $\mathcal{G}$ is strongly-connected, the corresponding Mauldin-Williams matrix $A(s)$ is irreducible and by the Perron-Frobenius theorem, for $s=D$ the spectral radius $1$ is also an eigenvalue with a positive eigenvector $p=(p_u)_{u\in V}$ (with $p_u>0$) so that we have

\begin{equation}\label{morangraphdirected}
p_u=\sum_{v \in V} a_{uv}(D) \, p_v=\sum_{v\in V} \sum_{e\in E_{uv}} r_e^D \, p_v.
\end{equation}

Since the functions \mbox{$h_u \, (u\in V)$} tend to zero for $x\to -\infty$, one can choose $x_0 \in \mathbb{R}$ such that $|h_u(x)| \leq p_u$ for $x\in (-\infty,x_0]$.

Let $x \in [x_0,x_0+\gamma]$. From (\ref{renewaleq}) and (\ref{morangraphdirected})
\begin{eqnarray*}
  |h_u(x)| & \leq & \sum_{v \in V} \sum_{e \in E_{uv}} r_e^D \, p_v \ + \ \sup_{x \in [x_0,x_0+\gamma]}|\psi_u (x)|\\
         & \leq & p_u + \ \sup_{x \in [x_0,x_0+\gamma]}|\psi_u (x)|.
\end{eqnarray*}

By the assumption on $\psi_u$, we can find an $M$ such that $\displaystyle |\psi_u(x)| \leq  p_u \, M \,  e^{-\tau |x|}$ (for all $u \in V$). Hence for $x \in [x_0,x_0+\gamma]$
\begin{equation}\label{renewalrestriction1}
|h_u(x)| \leq p_u \left(1 + M   \sup_{x \in [x_0,x_0+\gamma]}e^{-\tau |x|}\right).
\end{equation}
Since $|h_u(x)| \leq p_u$ for $x\in (-\infty,x_0]$, the inequality (\ref{renewalrestriction1}) holds for all $x\in (-\infty,x_0+\gamma]$.

Now, let $x \in  [x_0+\gamma,x_0+2\gamma]$. As above,
\begin{eqnarray*}
  |h_u(x)|  & \leq & \sum_{v \in V} \sum_{e \in E_{uv}} r_e^D \, p_v \, \left( 1+   M  \sup_{x \in [x_0,x_0+\gamma]}e^{-\tau |x|} \right)  \ + \ \sup_{x \in [x_0+\gamma,x_0+2\gamma]}|\psi_u(x)|\\
  & \leq & p_u \left(1+ M  \sup_{x \in [x_0,x_0+\gamma]}e^{-\tau |x|} \right)  + p_u\, M  \sup_{x \in [x_0+\gamma,x_0+2\gamma]}e^{-\tau |x|}\\
  &    = & p_u \left(1+ M  \sup_{x \in [x_0,x_0+\gamma]}e^{-\tau |x|}  +  M  \sup_{x \in [x_0+\gamma,x_0+2\gamma]}e^{-\tau |x|} \right).
\end{eqnarray*}
The above inequality clearly holds for all $x \in (-\infty,x_0+2\gamma]$.
Repeating the above argument, we see that for  all $x \in \mathbb{R}$,
\[|h_u(x)| \leq  p_u \left(1 +  M   \sum_{k=0}^{\infty} \, \sup_{x \in [x_0+k \gamma,x_0+(k+1)\gamma]}e^{-\tau |x|}\right). \]

This shows that $h_u$ is bounded on $\mathbb{R}$.
\end{proof}

Now we can derive an estimate for $V_{\mathcal{S}_u}(\varepsilon)$ as $\varepsilon \to 0$.

\begin{lemma}\label{lemmaestimate}
 Assume, there exists an $\alpha>0$ such that $V_{G_u}(\varepsilon)=O(\varepsilon^\alpha)$ for $\varepsilon \to 0$. We further assume that $n-\alpha<D<n$, where $D$ is the sim-value of the graph. Then, it holds $V_{\mathcal{S}_u}(\varepsilon)=O(\varepsilon^{n-D})$ as $\varepsilon \to 0$.
\end{lemma}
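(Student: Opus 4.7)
The natural plan is to massage the functional equation (\ref{functionalequationgenel}) into the renewal form (\ref{renewaleq}) of Lemma~\ref{lemmarenewal} via a logarithmic substitution, and then apply that lemma to the result. Specifically, I would set $\varepsilon=e^{-x}$ and introduce the rescaled unknowns
\[
h_u(x) \;=\; e^{(n-D)x}\, V_{\mathcal{S}_u}(e^{-x}) \qquad (u\in V),
\]
so that $V_{\mathcal{S}_u}(\varepsilon)=\varepsilon^{-(n-D)} h_u(-\log \varepsilon)$. The factor $\varepsilon^{n-D}$ is precisely the order of growth we want to isolate, so that bounding $h_u$ will translate directly to the conclusion $V_{\mathcal{S}_u}(\varepsilon)=O(\varepsilon^{n-D})$.

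With this substitution, the dilation $\varepsilon\mapsto \varepsilon/r_e$ becomes the translation $x\mapsto x-\log(1/r_e)$. A direct calculation then shows that the coefficient in front of $h_v(x-\log(1/r_e))$ on the right is $r_e^n\cdot r_e^{-(n-D)}=r_e^D$, so (\ref{functionalequationgenel}) transforms into exactly
\[
h_u(x)\;=\;\sum_{v\in V}\sum_{e\in E_{uv}} r_e^D\, h_v\!\bigl(x-\log(1/r_e)\bigr)\;+\;\psi_u(x),
\]
with forcing term
\[
\psi_u(x)\;=\;e^{(n-D)x}\, V_{G_u}(e^{-x}).
\]
This is the renewal system (\ref{renewaleq}), so what remains is to verify the two hypotheses of Lemma~\ref{lemmarenewal}.

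For the exponential decay $\psi_u(x)=O(e^{-\tau|x|})$, I would split into the two ends of the real line. As $x\to+\infty$ (so $\varepsilon\to 0$), the assumption $V_{G_u}(\varepsilon)=O(\varepsilon^\alpha)$ gives $\psi_u(x)=O(e^{-(\alpha-(n-D))x})$, and the exponent is positive precisely because the hypothesis $n-\alpha<D$ means $\alpha-(n-D)>0$. As $x\to-\infty$ (so $\varepsilon\to\infty$), the function $V_{G_u}(\varepsilon)$ is bounded by $\mathrm{Vol}(G_u)$, so $\psi_u(x)=O(e^{(n-D)x})$, and the exponent $n-D>0$ gives decay at $-\infty$. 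One can therefore take $\tau=\min\{\,n-D,\ \alpha-(n-D)\,\}>0$. For the condition $h_u(x)\to 0$ as $x\to-\infty$, I would invoke the remark preceding the lemma: since $D<n$, the total spray volume $\mathrm{Vol}(\mathcal{S}_u)$ is finite, so $V_{\mathcal{S}_u}(e^{-x})\to \mathrm{Vol}(\mathcal{S}_u)$ remains bounded while the prefactor $e^{(n-D)x}$ tends to $0$.

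Given both hypotheses, Lemma~\ref{lemmarenewal} yields $|h_u(x)|\le C$ uniformly in $x$, which on unwinding the substitution reads $V_{\mathcal{S}_u}(\varepsilon)\le C\,\varepsilon^{n-D}$ for $\varepsilon$ small, as claimed. The only conceptually delicate point is checking that $h_u$ really does tend to $0$ at $-\infty$, because this uses the finiteness of the total volume $\mathrm{Vol}(\mathcal{S}_u)$; this in turn rests on the strict inequality $D<n$ and the Perron--Frobenius-type fact recorded in the remark. The rest is a bookkeeping exercise with exponents, and I do not anticipate any further obstacle.
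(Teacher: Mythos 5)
Your proposal is correct and follows essentially the same route as the paper: normalize by $\varepsilon^{n-D}$, substitute $\varepsilon=e^{-x}$ to obtain the renewal system of Lemma~\ref{lemmarenewal} with $\tau=\min\{n-D,\ \alpha-(n-D)\}$, and check the two hypotheses exactly as you do. The only blemish is the sign slip in the line ``$V_{\mathcal{S}_u}(\varepsilon)=\varepsilon^{-(n-D)}h_u(-\log\varepsilon)$'', which should read $V_{\mathcal{S}_u}(\varepsilon)=\varepsilon^{n-D}h_u(-\log\varepsilon)$; the rest of your argument uses the correct relation.
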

\begin{remark}
If the generator $G_u$ is monophase or pluriphase (i.e. the volume of the inner $\varepsilon$-tube of $G_u$ is piecewise polynomial), then $\alpha \geq 1$ and the assumption $n-\alpha<D$ reduces to $n-1<D$. In applications to graph-directed fractals, $G_u$ will be mostly monophase or pluriphase, but in graph-directed sprays $G_u$ can be themselves of a fractal nature, so that $\alpha$ could be less than $1$.
\end{remark}
\begin{proof}[Proof of Lemma~\ref{lemmaestimate}]
Let us define $\displaystyle W_{\mathcal{S}_u}(\varepsilon)=\frac{V_{\mathcal{S}_u}(\varepsilon)}{\varepsilon^{n-D}}$ to obtain
\begin{equation}\label{renewaleqnormed}
  W_{\mathcal{S}_u}(\varepsilon)=\sum_{v\in V}\sum_{e\in E_{uv}} r_e^D \, W_{\mathcal{S}_v}(\varepsilon/r_e)+ \frac{V_{G_u}(\varepsilon)}{\varepsilon^{n-D}}
\end{equation}
from the equation (\ref{functionalequationgenel}). Let us now apply the change of variable $\varepsilon=e^{-x}$ in the equation (\ref{renewaleqnormed}). We obtain the following system of renewal equations on $\mathbb{R}$
\begin{equation*}
  h_u(x)=\sum_{v\in V}\sum_{e\in E_{uv}} r_e^D \, h_v(x-\log 1/r_e)+\psi_u(x),
\end{equation*}
where $h_u(x)=W_{\mathcal{S}_u}(e^{-x})$ and $\displaystyle \psi_u(x)= \frac{V_{G_u}(e^{-x})}{e^{-x(n-D)}}$.

Now we have to verify the assumptions of Lemma~\ref{lemmarenewal}.

$\displaystyle h_u(x)=h_u(-\log \varepsilon)=\frac{V_{\mathcal{S}_u}(\varepsilon)}{\varepsilon^{n-D}}$ tends to zero as $x \to -\infty$, i.e. $\varepsilon \to \infty$, since $D<n$ and the volume of the spray is finite.

To check the assumption on $\psi_u$, choose $\tau=\min \{n-D,D-n+\alpha\}$.

By Lemma~\ref{lemmarenewal},  $\displaystyle h_u(x)=\frac{V_{\mathcal{S}_u}(\varepsilon)}{\varepsilon^{n-D}}$ is bounded, so that
$\displaystyle V_{\mathcal{S}_u}(\varepsilon)=O(\varepsilon^{n-D})$.
\end{proof}

We will now apply the Mellin transform to the equation (\ref{functionalequationgenel}) and to this end, it will be convenient to define the auxiliary functions $\displaystyle f_u(\varepsilon)=\frac{V_{\mathcal{S}_u}(\varepsilon)}{\varepsilon^{n}}$, for $u\in V$ (these functions can be viewed as a kind of ``normed'' tube volumes). The system (\ref{functionalequationgenel}) of functional equations transforms into the following system:
\begin{equation}\label{functionalequationnormed}
  f_u(\varepsilon)=\sum_{v\in V}\sum_{e\in E_{uv}}   f_v(\varepsilon/r_e) + \frac{V_{G_u}(\varepsilon)}{\varepsilon^n}.
\end{equation}

Recall that the Mellin transform of a function $f:(0,\infty) \rightarrow \mathbb{R}$ is given by
\[ \mathcal{M}(f)(s) \equiv \widetilde f (s)=\int_0^{\infty} f(x) \, x^{s-1} dx.\]

If this integral exists for some $c\in \mathbb{R}$ and if the function $f$ is continuous at $x \in (0,\infty)$ and of bounded variation in a neighborhood of $x$, then $f(x)$ can be recovered by the inverse Mellin transform (\cite{Titch})
\[\frac{1}{2\pi  \mathbbm{i}}\, \lim_{T \to \infty} \int_{c-\mathbbm{i} T}^{c+\mathbbm{i} T} \widetilde f(s) \, x^{-s} ds.\]

The function $f_u$ is continuous and $f_u(\varepsilon)=O(\varepsilon^{-n})$ as $\varepsilon \to \infty$.
If for some $\alpha>0$, $V_{G_u}(\varepsilon)=O(\varepsilon^{\alpha})$ as $\varepsilon \to 0$ and $n-\alpha<D<n$, then
by Lemma~\ref{lemmaestimate}, $f_u(\varepsilon)=O(\varepsilon^{-D})$ as $\varepsilon \to 0$. So the integral
\[\int_0^\infty f_u(\varepsilon) \, \varepsilon^{s-1} \, d\varepsilon\]
exists for any $s$ with $D<{\rm Re}(s)<n$. Likewise,  the integral
\[\int_0^\infty \frac{V_{G_u}(\varepsilon)}{\varepsilon^n} \, \varepsilon^{s-1} \, d\varepsilon\]
exists for $n-\alpha<{\rm Re}(s)<n$. We can then take the Mellin transform of (\ref{functionalequationnormed}) to obtain
\begin{equation*}
  \widetilde{f_u}(s)=\sum_{v\in V}\left(\sum_{e\in E_{uv}} r_e^s\right)  \widetilde{f_v}(s) + \mathcal{M}\left(\frac{V_{G_u}(\varepsilon)}{\varepsilon^n}\right)(s)
\end{equation*}
for $D<{\rm Re}(s)<n$.
This system can also be written as a matrix equation
\[
F(s)=A(s)F(s)+\Phi(s) \quad (D<{\rm Re}(s)<n)\, ,
\]
where $F(s)$ is the column vector $\left[\widetilde{f_u}(s)\right]_{u\in V}$, $\displaystyle \Phi(s)=\left[  \mathcal{M}\left(\frac{V_{G_u}(\varepsilon)}{\varepsilon^n}\right)(s) \right]_{u\in V}$  and $A(s)$ is the Mauldin-Williams matrix.

\begin{lemma}\label{matrixinvertion}
For ${\rm Re}(s)>D$, the matrix $I-A(s)$ is invertible, so that it holds
\begin{equation}\label{matrixinversitioneqnum}
F(s)=\left[I-A(s)\right]^{-1} \Phi(s)
\end{equation}
for $D<{\rm Re}(s)<n$.
\end{lemma}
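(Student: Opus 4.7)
The plan is to reduce the invertibility of $I-A(s)$ to a statement about the spectral radius of $A(s)$, and then leverage the previously stated monotonicity of $\rho(A(\sigma))$ in $\sigma\geq 0$ together with a Perron--Frobenius-type comparison for complex matrices. First I would observe the entry-wise bound
\[
|a_{uv}(s)|=\Bigl|\sum_{e\in E_{uv}} r_e^s\Bigr|\;\leq\;\sum_{e\in E_{uv}} r_e^{\mathrm{Re}(s)}=a_{uv}(\mathrm{Re}(s)),
\]
which follows because the weights $r_e\in(0,1)$ are positive reals. Writing $|A(s)|$ for the matrix of entry-wise moduli, this says $|A(s)|\leq A(\mathrm{Re}(s))$ entry-wise, with $A(\mathrm{Re}(s))$ nonnegative.

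Next, invoking the standard Wielandt comparison result (if $|B|\leq C$ entry-wise with $C\geq 0$, then $\rho(B)\leq\rho(C)$), I would conclude
\[
\rho(A(s))\;\leq\;\rho(A(\mathrm{Re}(s))).
\]
Since the Mauldin--Williams graph is strongly connected, $\rho(A(\sigma))$ is strictly decreasing for $\sigma\geq 0$ and equals $1$ precisely at $\sigma=D$ (facts recalled from Edgar just before Lemma~\ref{lemmarenewal}). Hence for $\mathrm{Re}(s)>D$ we have $\rho(A(\mathrm{Re}(s)))<1$, and therefore $\rho(A(s))<1$. In particular, $1$ is not an eigenvalue of $A(s)$, so $I-A(s)$ is invertible.

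Finally, since for $D<\mathrm{Re}(s)<n$ all entries of $F(s)$ and $\Phi(s)$ are well-defined (by Lemma~\ref{lemmaestimate} and the hypothesis $V_{G_u}(\varepsilon)=O(\varepsilon^{\alpha})$ with $n-\alpha<D$, which give the Mellin integrals' convergence on this strip), taking Mellin transforms of (\ref{functionalequationnormed}) as derived in the text yields $F(s)=A(s)F(s)+\Phi(s)$, and multiplying both sides by the now-well-defined $[I-A(s)]^{-1}$ produces the formula (\ref{matrixinversitioneqnum}).

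The only subtle point is the step from the entry-wise domination $|A(s)|\leq A(\mathrm{Re}(s))$ to the spectral-radius inequality $\rho(A(s))\leq\rho(A(\mathrm{Re}(s)))$; this is the main obstacle, and I would handle it by citing the Wielandt/Perron--Frobenius comparison theorem (e.g.\ Horn--Johnson, the same source used for the monotonicity in the preceding remark). Everything else in the proof is formal: Gauss-elimination-style inversion of $I-A(s)$ and pointwise application of the Mellin transform, both of which are routine once the spectral estimate is in hand.
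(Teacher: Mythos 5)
Your proposal is correct and follows essentially the same route as the paper: both rest on the entry-wise bound $|a_{uv}(s)|\leq a_{uv}({\rm Re}(s))$ and the fact that $\rho(A(\sigma))<1$ for real $\sigma>D$. The only difference is cosmetic --- you pass from the entry-wise domination to $\rho(A(s))\leq\rho(A({\rm Re}(s)))$ by citing the Wielandt/Perron--Frobenius comparison theorem directly, whereas the paper reaches the same conclusion by noting that the domination persists for all powers, so $A^k(s)\to 0$ entry-wise and hence $\rho(A(s))<1$ by the equivalence in Horn--Johnson, Theorem~5.6.12.
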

\begin{proof}
This is a consequence of some well-known results from matrix algebra. For $s\in \mathbb{R}$, $s>D$, the spectral radius $\rho(A(s))$ is less than $1$ (\cite{EdgarBook}). Then by \mbox{\cite[Theorem~5.6.12]{HornMatrixBook}} $\displaystyle \lim_{k\to \infty} A^k(s)=0$ entry-wise. For arbitrary $s\in \mathbb{C}$ with ${\rm Re}(s)>D$, we have
\[
|a_{uv}(s)|=\left| \sum_{e\in E_{uv}} r_e^s \right|\leq \sum_{e\in E_{uv}} r_e^{{\rm Re}(s)}=a_{uv}({\rm Re}(s)).
\]
This holds for the entries of $A^k(s)$ and $A^k({\rm Re}(s))$ also, giving $\displaystyle \lim_{k\to \infty} A^k(s)=0$ entry-wise. We then have by \mbox{\cite[Theorem~5.6.12]{HornMatrixBook}}, $\rho(A(s))<1$, and thus $I-A(s)$ is invertible.
\end{proof}

We can write the matrix equation (\ref{matrixinversitioneqnum}) also as follows:
\begin{equation*}
\left[\widetilde{f_u}(s)\right]_{u\in V}=\dfrac{1}{\det(I-A(s))} \, \left[{\rm adj}(I-A(s))\right]_{uv} \, \left[\mathcal{M}\left(\dfrac{V_{G_u}(\varepsilon)}{\varepsilon^n}\right) (s)\right]_{u\in V},
\end{equation*}
or
\begin{equation}\label{matrixequationmellin}
\widetilde{f_u}(s)=\dfrac{1}{\det(I-A(s))} \, \sum_{v\in V} {\rm adj}(I-A(s))_{uv} \, \mathcal{M}\left(\dfrac{V_{G_v}(\varepsilon)}{\varepsilon^n}\right) (s) \quad \mbox{ for all $u \in V$}.
\end{equation}

We now apply the inverse Mellin transform to the equation (\ref{matrixequationmellin}): For $D<c<n$,
\begin{eqnarray*}
f_u(\varepsilon)&=&\frac{1}{2\pi\mathbbm{i}}\,  \int_{c-\mathbbm{i} \infty}^{c+\mathbbm{i} \infty} \widetilde{f_u}(s) \, \varepsilon^{-s} ds\\
&=& \frac{1}{2\pi\mathbbm{i}}\, \sum_{v\in V} \left(
\int_{c-\mathbbm{i} \infty}^{c+\mathbbm{i} \infty} \dfrac{{\rm adj}(I-A(s))_{uv}}{\det(I-A(s))} \, \mathcal{M}\left(\dfrac{V_{G_v}(\varepsilon)}{\varepsilon^n}\right) (s) \, \varepsilon^{-s} \, ds
\right).
\end{eqnarray*}

\begin{definition}
Let $\mathcal{G}=(V,E,r)$ be a Mauldin-Williams graph, $G_u \, (u\in V)$ bounded open sets in $\mathbb{R}^n$ and  $\mathcal{S}$ a graph-directed spray associated with $\mathcal{G}$ and generated by the open sets $G_u$.
Let $A(s)=\left[a_{uv}(s)\right]_{u,v \in V}$ with \mbox{$\displaystyle a_{uv}(s)=\sum_{e\in E_{uv}} r_e^s$} be the Mauldin-Williams matrix of the graph. We define the geometric zeta function of the graph-directed spray with respect to the node $u\in V$ as follows:
\[
\zeta_u(s)=\sum_{v\in V}  \dfrac{{\rm adj}(I-A(s))_{uv}}{\det(I-A(s))} \,\mathcal{M}\left(\dfrac{V_{G_v}(\varepsilon)}{\varepsilon^n}\right) (s),
\]
for $D<{\rm Re}(s)<n$ where $D$ is the sim-value of the Mauldin-Williams graph.
\end{definition}

$f_u(\varepsilon)$ can now be expressed as
\[
f_u(\varepsilon)=\frac{1}{2\pi\mathbbm{i}}\,  \int_{c-\mathbbm{i} \infty}^{c+\mathbbm{i} \infty} \zeta_u(s) \, \varepsilon^{-s} ds.
\]

At this point, we need some assumptions about the inner tube volumes of the generators to manipulate this expression further. We assume that the generators are monophase or pluriphase. Note that in this case the geometric zeta function which is analytic for $D<{\rm Re}(s)<n$ can be extended meromorphically to the whole plane $\mathbb{C}$, as can be seen from the explicit expressions for the Mellin transforms given in the Remarks~\ref{remark1maintheorem}-\ref{remark2maintheorem} below.

We can now express our main result as follows:

\begin{theorem}\label{maintheorem}
Let $\mathcal{G}=(V,E,r)$ be a Mauldin-Williams graph, $G_u \, (u\in V)$ bounded open sets in $\mathbb{R}^n$ and  $\mathcal{S}$ a graph-directed spray associated with $\mathcal{G}$ and generated by the open sets $G_u$.
Let $A(s)$ be the matrix of the Mauldin-Williams graph.

We assume the generators $G_u$ to be monophase or pluriphase. We furthermore assume that the sim-value $D$ of the Mauldin-Williams graph satisfies $n-1<D<n$.

Then for small $\varepsilon$, the volume of the inner $\varepsilon-$tube of the graph-directed spray  $\mathcal{S}$ can be expressed pointwise as the following residue formula:
\[V_\mathcal{S} (\varepsilon)=\sum_{u\in V} \sum_{\omega \in \mathfrak{D} \cup \{0,1,2,\dots,n-1\}} {\rm res}(\zeta_u(s)\, \varepsilon^{n-s};\omega),
\]
where $\mathfrak{D}$ is the set of zeros of $\det(I-A(s))$, which we call the complex dimensions of the graph-directed spray (For an exact bound for $\varepsilon$, see the last paragraph of the proof).
\end{theorem}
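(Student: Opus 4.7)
The plan is to combine the three ingredients already assembled above. Since the subcollections $(\mathcal{S}_u)_{u\in V}$ partition the spray, we have $V_\mathcal{S}(\varepsilon)=\sum_{u\in V} V_{\mathcal{S}_u}(\varepsilon)=\sum_{u\in V}\varepsilon^n f_u(\varepsilon)$. Inserting the inverse Mellin representation
\[
f_u(\varepsilon)=\frac{1}{2\pi\mathbbm{i}}\int_{c-\mathbbm{i}\infty}^{c+\mathbbm{i}\infty}\zeta_u(s)\,\varepsilon^{-s}\,ds\qquad (D<c<n)
\]
derived above and absorbing $\varepsilon^n$ into the integrand gives
\[
V_\mathcal{S}(\varepsilon)=\sum_{u\in V}\frac{1}{2\pi\mathbbm{i}}\int_{c-\mathbbm{i}\infty}^{c+\mathbbm{i}\infty}\zeta_u(s)\,\varepsilon^{n-s}\,ds.
\]
The residue formula then reduces to deforming each vertical contour leftward past all poles of $\zeta_u(s)\varepsilon^{n-s}$ lying in ${\rm Re}(s)<c$.

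As the next step I would identify these poles from the factorization
\[
\zeta_u(s)=\frac{1}{\det(I-A(s))}\sum_{v\in V}{\rm adj}(I-A(s))_{uv}\,\mathcal{M}\!\left(\frac{V_{G_v}(\varepsilon)}{\varepsilon^n}\right)(s).
\]
Zeros of $\det(I-A(s))$ form exactly the set $\mathfrak{D}$ of complex dimensions; by Lemma~\ref{matrixinvertion} they all satisfy ${\rm Re}(s)\le D<c$. For the Mellin factor, splitting the integral at the inradius $g_v$ in the monophase case gives the explicit meromorphic extension
\[
\mathcal{M}\!\left(\frac{V_{G_v}(\varepsilon)}{\varepsilon^n}\right)(s)=\sum_{i=0}^{n}\kappa_i^{v}\,\frac{g_v^{s-i}}{s-i},\qquad \kappa_n^v=-Vol(G_v),
\]
exhibiting simple poles at $s=0,1,\dots,n$; the pluriphase case produces a finite sum of such pieces with the same pole locations. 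Since $c<n$, the pole at $s=n$ is not crossed, whereas the integer points $\{0,1,\dots,n-1\}$ (all lying left of $c$ because $n-1<D<c$) together with $\mathfrak{D}$ constitute precisely the set of poles enumerated in the theorem.

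The main obstacle is the analytic justification of the contour shift. Concretely, I would close the original contour at ${\rm Re}(s)=c$ by a rectangle with corners $c\pm\mathbbm{i}T$ and $-M\pm\mathbbm{i}T$, apply the residue theorem, and let $T\to\infty$, with $M>0$ chosen so that the vertical line ${\rm Re}(s)=-M$ avoids $\mathfrak{D}$ and lies to the left of every pole we wish to collect. Two estimates are needed. First, the integrals on the horizontal segments ${\rm Im}(s)=\pm T$ must vanish as $T\to\infty$; this follows from the $O(1/|{\rm Im}(s)|)$ decay of $\mathcal{M}(V_{G_v}(\varepsilon)/\varepsilon^n)(s)$ visible in the explicit formula, combined with polynomial growth of the entries of ${\rm adj}(I-A(s))$ and a uniform lower bound on $|\det(I-A(s))|$ on horizontal strips avoiding $\mathfrak{D}$ (the latter resting on $\rho(A(s))<1$ for ${\rm Re}(s)>D$ and almost-periodicity of $\det(I-A(s))$ in ${\rm Im}(s)$). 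Second, the integral along ${\rm Re}(s)=-M$ must be negligible; since $|\varepsilon^{n-s}|=\varepsilon^{n+M}$ on this line, choosing $\varepsilon$ sufficiently small achieves this. Unwinding the resulting smallness condition in terms of the inradii $g_v$ and the leftmost element of $\mathfrak{D}\cup\{0,1,\dots,n-1\}$ yields the explicit bound on $\varepsilon$ promised in the final paragraph of the proof.
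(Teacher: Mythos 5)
Your overall strategy---writing $V_\mathcal{S}(\varepsilon)=\sum_{u}\varepsilon^n f_u(\varepsilon)$, inserting the inverse Mellin representation, identifying the poles of $\zeta_u(s)\,\varepsilon^{n-s}$ as $\mathfrak{D}\cup\{0,1,\dots,n-1\}$ (the pole at $s=n$ not being crossed since $c<n$), and collecting residues by a leftward contour shift over rectangles---is the paper's strategy, and your pole bookkeeping is correct. On the horizontal edges, however, your claim of ``a uniform lower bound on $|\det(I-A(s))|$ on horizontal strips avoiding $\mathfrak{D}$'' is not true for arbitrary heights $T\to\infty$: zeros of $\det(I-A(s))$ can approach the lines ${\rm Im}(s)=\pm T$ arbitrarily closely, so the minimum of $|\det(I-A(s))|$ on the segment can decay faster than the $1/T$ gain from the factor $1/|s-i|$. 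One must \emph{select} a sequence of heights $\tau_j\to\infty$ on which $|\det(I-A(s))|>K$ uniformly; the paper does this via the Dirichlet lemma on simultaneous Diophantine approximation, exploiting the almost periodicity of $1+\sum_\alpha p_\alpha^s-\sum_\beta q_\beta^s$. Since you name almost-periodicity, this is a repairable vagueness rather than a wrong idea.

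The genuine gap is at the left edge. With a \emph{fixed} abscissa ${\rm Re}(s)=-M$ the residue theorem yields the claimed residue sum \emph{plus} the integral along that vertical line, and that integral is not zero: you would prove $V_\mathcal{S}(\varepsilon)=\hbox{(residue sum)}+E_M(\varepsilon)$ with $E_M(\varepsilon)$ merely small, whereas the theorem asserts exact pointwise equality. Worse, the order of limits defeats the estimate you propose: on the truncated left edge the absolute-value bound is of the order $\bigl(\varepsilon/(r_{\min}^{N-1}g)\bigr)^{M}\int_{-\tau_j}^{\tau_j}(M^{2}+t^{2})^{-1/2}\,dt$, which grows like $\log\tau_j$ for fixed $M$, so ``let $T\to\infty$, then invoke smallness in $\varepsilon$'' does not close; the two limits must be coupled. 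The paper's resolution is to first prove that $\det(I-A(s))=1+\sum_\alpha p_\alpha^s-\sum_\beta q_\beta^s$ has no zeros for ${\rm Re}(s)\le c_l$ for some fixed $c_l<0$ (so $\mathfrak{D}$ lies in a vertical strip---a fact you also rely on when choosing $M$ but never establish), and then to replace the left edge $L_{3,j}$ by the semicircle $c_l+\tau_j e^{\mathbbm{i}t}$, $\tfrac{\pi}{2}\le t\le\tfrac{3\pi}{2}$, on which ${\rm Re}(s)\to-\infty$ together with $|{\rm Im}(s)|\to\infty$. A Jordan-lemma estimate, using $|{\rm adj}(I-A(s))_{uv}|\le C'r_{\min}^{(N-1){\rm Re}(s)}$ and $|\det(I-A(s))|>\delta$ there, shows this contribution tends to $0$ precisely when $r_{\min}^{N-1}g/\varepsilon>1$; that is where the explicit validity range $\varepsilon<r_{\min}^{N-1}\min_u g_u$ (respectively $g_{1,u}$ in the pluriphase case) comes from, not from making a fixed-line error term small.
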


\begin{remark}\label{remark1maintheorem}
If the generator $G_u$ is monophase with tube formula
 \begin{eqnarray*}
     V_{G_u}(\varepsilon)&=& \left\{ \begin{array}{ccc}
     \displaystyle \sum_{i=0}^{n-1} \kappa_i^u \, \varepsilon^{n-i} &, & 0\leq \varepsilon \leq g_u \\
     Vol(G_u)&,& \varepsilon \geq g_u \ ,
     \end{array}\right.
   \end{eqnarray*}
then
\[
\mathcal{M}\left(\dfrac{V_{G_u}(\varepsilon)}{\varepsilon^n}\right) (s)=\sum_{i=0}^{n} \kappa_i^u \, \dfrac{g_u^{s-i}}{s-i}\, ,
\]
where $\kappa_n^u=-Vol(G_u)$.
\end{remark}

\begin{remark}\label{remark2maintheorem}
If the generator $G_u$ is pluriphase, let us assume that it has the tube formula
 \begin{eqnarray*}
     V_{G_u}(\varepsilon)&=& \left\{ \begin{array}{ccc}
     \displaystyle \sum_{i=0}^{n} \kappa_i^{m,u} \, \varepsilon^{n-i} &, & g_{m-1,u}\leq \varepsilon \leq g_{m,u}\, , \ m=1,2,\ldots,M_u \\
     Vol(G_u)&,& \varepsilon \geq g_u \ ,
     \end{array}\right.
   \end{eqnarray*}
where $g_{0,u}=0$, $g_{M_u,u}=g_u$ ($g_u$ the inradius of $G_u$) and $\kappa_n^{1,u}=0$. It will be more convenient to write above formula as
\[
 V_{G_u}(\varepsilon)=\sum_{i=0}^{n} \kappa_i^{m,u} \, \varepsilon^{n-i} \ \mbox{ for } g_{m-1,u}\leq \varepsilon \leq g_{m,u}\, , \ m=1,2,\ldots,M_u+1\, ,
\]
where we set $\kappa_i^{M_u+1,u}=0$ for $i=0,1,\ldots,n-1$, $\kappa_n^{M_u+1,u}=Vol(G_u)$ and $g_{M_u+1,u}=\infty$. Then
\[
\mathcal{M}\left(\dfrac{V_{G_u}(\varepsilon)}{\varepsilon^n}\right) (s)=\sum_{m=1}^{M_u} \sum_{i=0}^{n} \left(\kappa_i^{m,u}- \kappa_i^{m+1,u}\right) \dfrac{g_{m,u}^{s-i}}{s-i}\,.\]
\end{remark}

\begin{example}[Example~\ref{detailedexample} continued]
The volumes of the inner $\varepsilon-$neighborhood of $G_1$ and $G_2$ are given by the following functions:
\[
V_{G_1}(\varepsilon)=\left\{\begin{array}{cll}
4\sqrt{2}\varepsilon-4\varepsilon^2&,&\varepsilon\leq \frac{\sqrt 2}{2}\\
2&,&\varepsilon\geq \frac{\sqrt 2}{2}
\end{array}\right.
\]
\[
V_{G_2}(\varepsilon)=\left\{\begin{array}{cll}
\left(\frac{2+\sqrt{2}}{2}\right)\varepsilon-(3+2\sqrt2)\varepsilon^2&,&\varepsilon \leq\frac{2-\sqrt2}{4}\\
\frac{1}{8}&,&\varepsilon\geq \frac{2-\sqrt 2}{4}
\end{array}\right.
\]
The corresponding Mauldin-Williams matrix of the graph is
\[
A(s)=\left(
\begin{array}{ccc}0&4\,\dfrac{1}{2^s}\\
\dfrac{1}{2^s}&\dfrac{1}{2^s}+3 \, \dfrac{1}{4^s}\end{array}
\right)
\]
and the sim-value of the graph is $D=\log_2\left(\frac{\sqrt{29}+1}{2}\right)$. The complex dimensions of the graph-directed spray are given by
\[
\left\{\log_2\left(\frac{\sqrt{29}+1}{2}\right)+\mathbbm{i}kp \, | \, k\in \mathbb{Z} \right\} \cup
\left\{\log_2\left(\frac{\sqrt{29}-1}{2}\right)+\mathbbm{i}\left(k+\frac{1}{2}\right)p \, | \, k\in \mathbb{Z}\right\},
\]
where $p=\dfrac{2\pi}{\ln2}$.
Using Theorem~\ref{maintheorem}, we obtain the volume of the $\varepsilon-$neighborhood of $\mathcal{S}$ as
\begin{eqnarray*}
V_\mathcal{S} (\varepsilon)&=& \frac{4}{7}\varepsilon^2-\frac{28}{5}\sqrt 2\varepsilon+\Sigma_1+\Sigma_2 + \frac{2}{7}(3+2\sqrt2)\varepsilon^2-\frac{3}{5}(2+\sqrt2)\varepsilon+\Sigma_3+\Sigma_4\, ,
\end{eqnarray*}
where
\begin{eqnarray*}
\Sigma_1&=&\frac{\varepsilon^{2-(D+\mathbbm{i}kp)}}{\sqrt{29} \ln 2} \left[\frac{2\sqrt{29}-2}{7}\left(-4\frac{(\frac{\sqrt 2}{2})^{D+\mathbbm{i}kp}}{D+\mathbbm{i}kp}+4\sqrt 2 \frac{(\frac{\sqrt 2}{2})^{D-1+\mathbbm{i}kp}}{D-1+\mathbbm{i}kp}-2\frac{(\frac{\sqrt 2}{2})^{D-2+\mathbbm{i}kp}}{D-2+\mathbbm{i}kp}\right)\right.\\
&& \left.+4\left( -(3+2\sqrt 2)\frac{(\frac{2-\sqrt 2}{4})^{D+\mathbbm{i}kp}}{D+\mathbbm{i}kp}+\frac{2+\sqrt 2}{2} \frac{(\frac{2-\sqrt 2}{4})^{D-1+\mathbbm{i}kp}}{D-1+\mathbbm{i}kp}-\frac{1}{8}\frac{(\frac{2-\sqrt2}{4})^{D-2+\mathbbm{i}kp}}{D-2+\mathbbm{i}kp}\right)\right]
\end{eqnarray*}
\begin{eqnarray*}
\Sigma_2&=&\frac{\varepsilon^{2-(D'+\mathbbm{i}(k+\frac{1}{2})p)}}{\sqrt{29} \ln 2} \times \\
&& \left[\frac{2\sqrt{29}+2}{7}\left(-4\frac{(\frac{\sqrt 2}{2})^{D'+\mathbbm{i}(k+\frac{1}{2})p}}{D'+\mathbbm{i}(k+\frac{1}{2})p}+4\sqrt 2 \frac{(\frac{\sqrt 2}{2})^{D'-1+\mathbbm{i}(k+\frac{1}{2})p}}{D'-1+\mathbbm{i}(k+\frac{1}{2})p}-2\frac{(\frac{\sqrt 2}{2})^{D'-2+\mathbbm{i}(k+\frac{1}{2})p}}{D'-2+\mathbbm{i}(k+\frac{1}{2})p}\right)\right.\\
&& \left.-4\left(-(3+2\sqrt 2)\frac{(\frac{2-\sqrt 2}{4})^{D'+\mathbbm{i}(k+\frac{1}{2})p}}{D'+\mathbbm{i}(k+\frac{1}{2})p}+\frac{2+\sqrt 2}{2} \frac{(\frac{2-\sqrt 2}{4})^{D'-1+\mathbbm{i}(k+\frac{1}{2})p}}{D'-1+\mathbbm{i}(k+\frac{1}{2})p}-\frac{1}{8}\frac{(\frac{2-\sqrt 2}{4})^{D'-2+\mathbbm{i}(k+\frac{1}{2})p}}{D'-2+\mathbbm{i}(k+\frac{1}{2})p}\right)\right]
\end{eqnarray*}

\begin{eqnarray*}
\Sigma_3&=&\frac{\varepsilon^{2-(D+\mathbbm{i}kp)}}{\sqrt{29}\ln 2} \times \\
&& \left[ \left(   -4\frac{(\frac{\sqrt 2}{2})^{D+\mathbbm{i}kp}}{D+\mathbbm{i}kp}+4\sqrt 2 \frac{(\frac{\sqrt 2}{2})^{D-1+\mathbbm{i}kp}}{D-1+\mathbbm{i}kp}-2\frac{(\frac{\sqrt 2}{2})^{D-2+\mathbbm{i}kp}}{D-2+\mathbbm{i}kp}
\right) \right.\\
&& \left.+\frac{\sqrt{29}+1}{2}\left(-(3+2\sqrt 2)\frac{(\frac{2-\sqrt 2}{4})^{D+\mathbbm{i}kp}}{D+\mathbbm{i}kp}+\frac{2+\sqrt 2}{2} \frac{(\frac{2-\sqrt2}{4})^{D-1+\mathbbm{i}kp}}{D-1+\mathbbm{i}kp}-\frac{1}{8}\frac{(\frac{2-\sqrt2}{4})^{D-2+\mathbbm{i}kp})}{D-2+\mathbbm{i}kp}\right)\right]
\end{eqnarray*}
\begin{eqnarray*}
\Sigma_4&=&-\frac{\varepsilon^{2-(D'+\mathbbm{i}(k+\frac{1}{2})p)}}{\sqrt{29}\ln 2} \times\\
&&\left[\left(-4\frac{(\frac{\sqrt 2}{2})^{D'+\mathbbm{i}(k+\frac{1}{2})p}}{D'+\mathbbm{i}(k+\frac{1}{2})p}+4\sqrt 2 \frac{(\frac{\sqrt 2}{2})^{D'-1+\mathbbm{i}(k+\frac{1}{2})p}}{D'-1+\mathbbm{i}(k+\frac{1}{2})p}-2\frac{(\frac{\sqrt 2}{2})^{D'-2+\mathbbm{i}(k+\frac{1}{2})p}}{D'-2+\mathbbm{i}(k+\frac{1}{2})p}\right)\right.\\
&
-&\left.\frac{\sqrt{29}+1}{2}\left(-(3+2\sqrt 2)\frac{(\frac{2-\sqrt 2}{4})^{D'+\mathbbm{i}(k+\frac{1}{2})p}}{D'+\mathbbm{i}(k+\frac{1}{2})p}+\frac{2+\sqrt 2}{2} \frac{(\frac{2-\sqrt 2}{4})^{D'-1+\mathbbm{i}(k+\frac{1}{2})p}}{D'-1+\mathbbm{i}(k+\frac{1}{2})p}-\frac{1}{8}\frac{(\frac{2-\sqrt 2}{4})^{D'-2+\mathbbm{i}(k+\frac{1}{2})p}}{D'-2+\mathbbm{i}(k+\frac{1}{2})p}\right)\right].
\end{eqnarray*}

\end{example}

\bigskip

\section{Proof of the Main Theorem (Theorem~\ref{maintheorem})}

\noindent Since $\displaystyle V_\mathcal{S}(\varepsilon)=\varepsilon^n \sum_{u\in V} f_u(\varepsilon)$ and
\begin{eqnarray*}
f_u(\varepsilon)&=&\frac{1}{2\pi\mathbbm{i}}\, \sum_{v\in V} \left(
\int_{c-\mathbbm{i} \infty}^{c+\mathbbm{i} \infty} \dfrac{{\rm adj}(I-A(s))_{uv}}{\det(I-A(s))} \, \mathcal{M}\left(\dfrac{V_{G_v}(\varepsilon)}{\varepsilon^n}\right) (s) \, \, \varepsilon^{-s} \, ds
\right)\\
&=&\frac{1}{2\pi\mathbbm{i}}\,  \int_{c-\mathbbm{i} \infty}^{c+\mathbbm{i} \infty} \zeta_u(s) \, \varepsilon^{-s} ds\, ,
\end{eqnarray*}
we have to evaluate the integral on the right hand side. As is well-known there is a general procedure to evaluate this integral by applying the residue theorem. In the present case of graph-directed sprays however, the $\det(I-A(s))$ in the denominator calls for a more cautious treatment in order to be able to give an explicit validity range of the tube formula for small $\varepsilon$.

The above sum consists of integrals of the type
\[
\frac{1}{2\pi\mathbbm{i}}\,
\int_{c-\mathbbm{i} \infty}^{c+\mathbbm{i} \infty} \dfrac{{\rm adj}(I-A(s))_{uv}}{\det(I-A(s))} \, \dfrac{g^{s-i}}{s-i}\, \varepsilon^{-s} \, ds \quad (i=0,1,\ldots,n),
\]
for $n-1<D<c<n$. Recall that $\det(I-A(s))$ is non-zero for ${\rm Re}(s)>D$.

We first note that there exists a vertical strip containing all the zeros of $\det(I-A(s))$. To see this, notice that $\det(I-A(s))$ can be expressed as a sum
\[
1+\sum_\alpha p_\alpha^s - \sum_\beta q_\beta^s
\]
with $0<p_\alpha, q_\beta<1$ since the entries of the matrix $A(s)$ are of the form $\sum r_e^s$ for \mbox{$0<r_e<1$}. As ${\rm Re}(s) \to -\infty$, the smallest of $p_\alpha, q_\beta$ will dominate and avoid \mbox{$\det(I-A(s))$} to vanish. We choose a $c_l<0$ such that $|\det(I-A(s))|>\delta$ for some $\delta>0$ and for all ${\rm Re}(s)\leq c_l$.

We will choose a sequence $(\tau_j)_{j\in \mathbb{N}}\to\infty$ such that $|\det(I-A(s))|$ will be uniformly away from zero on the line segments $c_l \leq {\rm Re}(s) \leq c$,  ${\rm Im}(s)=\pm \tau_j$.

\begin{lemma}
There exists an increasing sequence $(\tau_j)_{j\in \mathbb{N}}$, $\tau_j\to\infty$ and a $K>0$ such that $|\det(I-A(s))|>K$ for $c_l \leq {\rm Re}(s) \leq c$  and ${\rm Im}(s)=\pm \tau_j$.
\end{lemma}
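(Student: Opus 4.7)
\noindent The function $F(s):=\det(I-A(s))$ is an entire exponential polynomial of the form
\[
F(s)=1+\sum_{\alpha} p_\alpha^{s}-\sum_{\beta}q_\beta^{s},\qquad 0<p_\alpha,q_\beta<1,
\]
and the plan is to exploit its almost periodicity in the imaginary direction via a Kronecker/simultaneous-Diophantine-approximation argument. The strategy is first to secure a single horizontal reference line at some height $\tau_0$ on which $|F|$ is uniformly bounded below over $\sigma\in[c_l,c]$, and then to translate this lower bound to arbitrarily large heights by making the factors $e^{\mathbbm{i}\tau\log p_\alpha}$ and $e^{\mathbbm{i}\tau\log q_\beta}$ all simultaneously close to $1$.

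\medskip

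\noindent For the reference line: since $F$ is a nonzero entire function, its zero set is discrete and in particular at most countable, so the set of imaginary parts of zeros of $F$ is countable. I would pick any $\tau_0>0$ outside this set; then $\sigma\mapsto|F(\sigma+\mathbbm{i}\tau_0)|$ is continuous and strictly positive on the compact interval $[c_l,c]$, and hence $K_0:=\min_{\sigma\in[c_l,c]}|F(\sigma+\mathbbm{i}\tau_0)|>0$. Next, by Kronecker's theorem applied to the finite family of frequencies $\{\log p_\alpha,\log q_\beta\}$, for every $\delta>0$ the set
\[
T_\delta:=\bigl\{\tau\in\mathbb{R}:\ |e^{\mathbbm{i}\tau\log p_\alpha}-1|<\delta \text{ and } |e^{\mathbbm{i}\tau\log q_\beta}-1|<\delta \text{ for all } \alpha,\beta\bigr\}
\]
is relatively dense in $\mathbb{R}$, and in particular contains an increasing sequence going to $+\infty$. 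Term-by-term expansion then yields, uniformly in $\sigma\in[c_l,c]$,
\[
|F(\sigma+\mathbbm{i}(\tau_0+\tau))-F(\sigma+\mathbbm{i}\tau_0)|\ \leq\ C\delta,\qquad \tau\in T_\delta,
\]
with $C:=\sum_\alpha\max(p_\alpha^{c_l},p_\alpha^{c})+\sum_\beta\max(q_\beta^{c_l},q_\beta^{c})<\infty$.

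\medskip

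\noindent To finish, I would fix $\delta$ with $C\delta<K_0/2$, pick an increasing sequence $\tau^{(0)}_j\to\infty$ in $T_\delta$, and set $\tau_j:=\tau_0+\tau^{(0)}_j$ and $K:=K_0/2$; the triangle inequality then yields $|F(\sigma+\mathbbm{i}\tau_j)|>K$ uniformly in $\sigma\in[c_l,c]$, and the reality of the coefficients gives $|F(\bar s)|=|F(s)|$ and hence the same bound at ${\rm Im}(s)=-\tau_j$. The delicate point will be the uniformity in $\sigma$ during the translation step: this is what makes the fixed, bounded width of the strip $[c_l,c]$ essential, since $C$ is finite only because $p_\alpha^\sigma$ and $q_\beta^\sigma$ are uniformly bounded on $[c_l,c]$. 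The other key ingredient, existence of a good reference height $\tau_0$, is automatic from discreteness of the zero set of $F$ together with compactness of $[c_l,c]$.
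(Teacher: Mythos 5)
Your proof is correct and follows essentially the same route as the paper: fix a reference height where $|\det(I-A(s))|$ has a positive minimum over the compact strip width, then use simultaneous Diophantine approximation of the frequencies $\log p_\alpha,\log q_\beta$ to find arbitrarily large almost-periods $\tau$ along which the exponential sum moves by less than half that minimum (the paper does this iteratively via the Dirichlet Lemma, you extract the whole sequence at once from the relatively dense set $T_\delta$, which is a cleaner packaging of the same idea). The only point to fix is the attribution: the relative density of $T_\delta$ is \emph{not} Kronecker's theorem, whose inhomogeneous form requires the frequencies to be linearly independent over $\mathbb{Q}$ (which may fail here); the homogeneous statement you actually need --- that all $e^{\mathbbm{i}\tau\log p_\alpha}$, $e^{\mathbbm{i}\tau\log q_\beta}$ can simultaneously be made close to $1$ for a relatively dense set of $\tau$ --- holds unconditionally and is exactly the Dirichlet simultaneous approximation lemma the paper cites.
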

\begin{proof}
Being an entire function, $\det(I-A(s))$ has isolated zeros and we can choose $\tau_1>0$ such that there are no zeros on the segment $[c_l,c]\times \{\tau_1\}$. Let $2K$ be the minimum of $|\det(I-A(s))|$ on this segment.

To construct $\tau_2$, we need the following lemma.

\begin{lemma}[Dirichlet Lemma, \cite{Bateman}]\label{dirichletlemma}
Let $M,N \in \mathbb{N}$ and $T>0$. Let $a_1,a_2,\ldots,a_N$ be real numbers. There exists a real number $h\in [T,T\,M^N]$ such that
\[
\|  a_i \, h \|  \leq \dfrac{1}{M} \quad (1\leq i \leq N).
\]
Here $\|  \cdot \| $ denotes ``the distance to the nearest integer'' function on $\mathbb{R}$.
\end{lemma}

The idea for choosing $\tau_2=\tau_1 + h$ will be the following. We want to arrange $h$ such that
\[
| \det(I-A(s+ih))-\det(I-A(s))|<K
\]
for $c_l \leq {\rm Re}(s) \leq c$. Then the minimum of
$| \det(I-A(s+ih))|$ will be greater than $K$. Since
$\displaystyle \det(I-A(s))=1+\sum_\alpha p_\alpha^s - \sum_\beta q_\beta^s$, we have
\begin{eqnarray*}
|\det(I-A(s+ih))-\det(I-A(s))|&=&
\left| \sum_\alpha \left( p_\alpha^{s+ih} - p_\alpha^s \right) -  \sum_\beta \left(q_\beta^{s+ih}- q_\beta^s \right) \right|\\
&\leq& \sum_\alpha \left| p_\alpha^{s+ih} - p_\alpha^s \right|+  \sum_\beta \left| q_\beta^{s+ih}- q_\beta^s \right|\\
&=& \sum_\alpha p_\alpha^{{\rm Re}(s)} \left| p_\alpha^{ih} - 1 \right|+  \sum_\beta q_\beta^{{\rm Re}(s)} \left| q_\beta^{ih}- 1 \right|\, .
\end{eqnarray*}
Since the number of terms and $p_\alpha^{{\rm Re}(s)}, q_\beta^{{\rm Re}(s)}$ are bounded, it will be enough to make the factors $\left| p_\alpha^{ih} - 1 \right|, \left| q_\beta^{ih}- 1 \right|$  small enough. To realize this, we can apply the Dirichlet Lemma (Lemma~\ref{dirichletlemma}) to make $ p_\alpha^{ih}=e^{ih \ln p_\alpha}$ and $ q_\beta^{ih}=e^{ih \ln q_\beta}$ close enough to $1$, by choosing $\displaystyle \left\| \dfrac{h \ln p_\alpha}{2\pi} \right\|$ and $\left\|  \dfrac{h \ln q_\beta}{2\pi} \right\|$ small enough.

As we have control on choosing $h$ on any range, we can repeat this procedure to get a sequence of segments $[c_l,c]\times\{\tau_j\}$ on all of which $|\det(I-A(s))|$ is bounded below by $K$.
\end{proof}

Let us now consider the rectangles $R_j=[c_l,c]\times[-\tau_j,\tau_j]$ and denote its oriented edges by $L_{1,j}, L_{2,j}, L_{3,j}, L_{4,j}$ as shown in Figure~\ref{orientedlines}. We will show that for small enough $\varepsilon$ the integrals on $L_{2,j}, L_{3,j}$ and $L_{4,j}$ will tend to zero as $j\to\infty$ so that by residue theorem we will get the integral on the vertical line at $c$ as a series of residues on the strip $c_l<{\rm Re}(s) < c$:
\[
\frac{1}{2\pi\mathbbm{i}}\,
\int_{c-\mathbbm{i} \infty}^{c+\mathbbm{i} \infty} \dfrac{{\rm adj}(I-A(s))_{uv}}{\det(I-A(s))} \, \dfrac{g^{s-i}}{s-i}\, \varepsilon^{-s} \, ds
=
\sum_{\omega \in \mathfrak{D} \cup \{i\}} {\rm res}\left( \dfrac{{\rm adj}(I-A(s))_{uv}}{\det(I-A(s))} \, \dfrac{g^{s-i}}{s-i}\,   ;\omega\right)
\]
for $i=0,1,\ldots, n-1$. For $i=n$ the same formula holds with the only difference that the residues are taken on $\mathfrak{D}$.

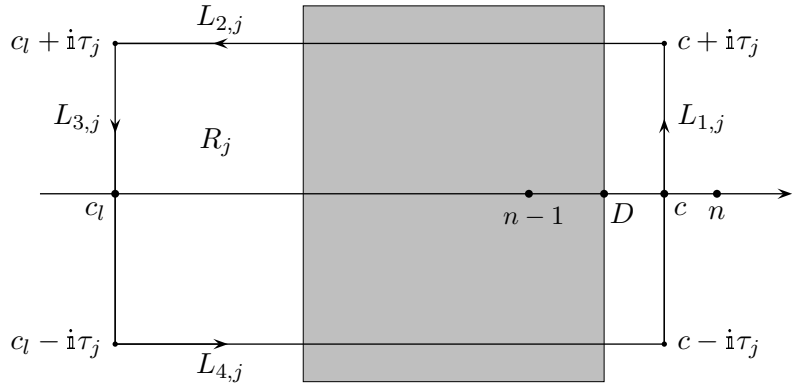
\begin{figure}[h]
\begin{center}
\begin{pspicture}(-10.25,-2.7)(0.25,2.7)
\pspolygon[linewidth=0.01pt,fillstyle=solid,fillcolor=lightgray](-2.5,-2.5)(-2.5,2.5)(-6.5,2.5)(-6.5,-2.5)
\psline[linewidth=0.5pt,arrowsize=4pt]{->}(-10,0)(0,0)
\psline[linewidth=0.5pt](-1.7,-2)(-1.7,2)
\psline[linewidth=0.5pt,arrowsize=4pt]{->}(-1.7,-2)(-1.7,1)
\psline[linewidth=0.5pt](-9,-2)(-9,2)
\psline[linewidth=0.5pt,arrowsize=4pt]{-<}(-9,-2)(-9,1)
\psline[linewidth=0.5pt,arrowsize=4pt](-9,2)(-1.7,2)
\psline[linewidth=0.5pt,arrowsize=4pt]{-<}(-9,2)(-7.5,2)
\psline[linewidth=0.5pt,arrowsize=4pt](-9,-2)(-1.7,-2)
\psline[linewidth=0.5pt,arrowsize=4pt]{->}(-9,-2)(-7.5,-2)
\uput[d](-1,0){$n$}
\uput[d](-3.5,0){ \small$ n-1$}
\uput[dr](-2.55,0){$D$}
\uput[dl](-9,0){$c_{l}$}
\uput[dr](-1.7,0){$c$}
\uput[dr](-8,1){$R_j$}
\uput[r](-1.7,-2){$c-\mathbbm{i}\tau_j$}\qdisk(-1.7,-2){1pt}
\uput[r](-1.7,2){$c+\mathbbm{i}\tau_j$}\qdisk(-1.7,2){1pt}
\uput[l](-9,-2){$c_{l}-\mathbbm{i}\tau_j$}\qdisk(-9,-2){1pt}
\uput[l](-9,2){$c_{l}+\mathbbm{i} \tau_j$}\qdisk(-9,2){1pt}
\uput[u](-7.6,1.95){$L_{2,j}$}
\uput[d](-7.6,-1.95){$L_{4,j}$}
\uput[l](-9,1){$L_{3,j}$}
\uput[r](-1.7,1){$L_{1,j}$}
\qdisk(-9,0){1.5pt}
\qdisk(-3.5,0){1.5pt}
\qdisk(-2.5,0){1.5pt}
\qdisk(-1.7,0){1.5pt}
\qdisk(-1,0){1.5pt}
\end{pspicture}
\end{center}
\caption{The  strip containing the poles of the geometric zeta function $\zeta_u(s)$ and the rectangle $R_j$ with the oriented boundary segments $L_{1,j},L_{2,j},L_{3,j},L_{4,j}$ used in the proof.} \label{orientedlines}
\end{figure}

First consider the integral on $L_{2,j}=t+\mathbbm{i}\tau_j, \, c_l\leq t \leq c$.
\begin{eqnarray*}
\left | \int_{L_{2,j}} \dfrac{{\rm adj}(I-A(s))_{uv}}{\det(I-A(s))} \, \dfrac{g^{s-i}}{s-i}\, \varepsilon^{-s} \, ds \right | &\leq&
\int_{c_l}^c \left |\dfrac{{\rm adj}(I-A(t+\mathbbm{i} \tau_j))_{uv}}{\det(I-A(t+\mathbbm{i}\tau_j))} \right |   \, \dfrac{g^{t-i}}{| t+\mathbbm{i} \tau_j-i |} \, \varepsilon^{-t} \, dt
\end{eqnarray*}
${\rm adj}(I-A(s))_{uv}$ is of the form $\displaystyle \tilde{1}+\sum_\alpha p_\alpha^s - \sum_\beta q_\beta^s$ ($\tilde{1}$ indicates that $1$ might be present or absent) and therefore is bounded for $c_l\leq {\rm Re}(s) \leq c$, so that we can write
\begin{eqnarray*}
\left | \int_{L_{2,j}} \dfrac{{\rm adj}(I-A(s))_{uv}}{\det(I-A(s))} \, \dfrac{g^{s-i}}{s-i}\, \varepsilon^{-s} \, ds \right | &\leq&
C \, \int_{c_l}^c \dfrac{dt}{\tau_j} \quad \mbox{ ($C$ being a constant not depending on $j$)}
\end{eqnarray*}
which tends to zero for $\tau_j\to\infty$.

Similarly the integral on $L_{4,j}\to 0$ for $\tau_j\to\infty$.

Finally we consider the integral on $L_{3,j}$.
\begin{eqnarray*}
 \int_{L_{3,j}} \dfrac{{\rm adj}(I-A(s))_{uv}}{\det(I-A(s))} \, \dfrac{g^{s-i}}{s-i}\, \varepsilon^{-s} \, ds  &=&
 \int_{C_j} \dfrac{{\rm adj}(I-A(s))_{uv}}{\det(I-A(s))} \, \dfrac{g^{s-i}}{s-i}\, \varepsilon^{-s} \, ds
\end{eqnarray*}
where $C_j=c_l+\tau_j e^{\mathbbm{i}t}, \, \frac{\pi}{2}\leq t \leq \frac{3\pi}{2} $ (see Figure~\ref{semicircle}).  ${\rm adj}(I-A(s))_{uv}$ is of the form $\tilde{1}+\sum_\alpha p_\alpha^s - \sum_\beta q_\beta^s$. In any term, $p_\alpha , q_\beta$ there can appear at most $(N-1).$ power of the smallest weight $r_{{\rm min}}$ of the graph where $N$ is the number of the nodes of the graph. We can thus dominate $|{\rm adj}(I-A(s))_{uv}|$ by $C^\prime \, r_{{\rm min}}^{(N-1){\rm Re}(s)}$.
We had $|\det(I-A(s))|>\delta$ by the choice of $c_l$.

\begin{figure}[h]
\begin{center}
\begin{pspicture}(-2,-2.1)(4,2)
\psline [linewidth=0.5pt,arrowsize=4pt]{->}(-1,0)(4,0)
\psline[linewidth=0.5pt](2,-2)(2,2)
\psline[linewidth=0.5pt,arrowsize=4pt]{->}(2,2)(2,-1)
\psarc[linewidth=0.5pt](2,0){2}{90}{270}
\psarc[linewidth=0.5pt,arrowsize=4pt]{->}(2,0){2}{90}{150}
\uput[r](2,-1){$L_{3,j}$}
\uput[ur](2,0){$c_{l}$}
\uput[r](2,-2){$c_{l}-\mathbbm{i}\tau_j$}
\uput[r](2,2){$c_{l}+\mathbbm{i}\tau_j$}
\rput(0,1.5){$C_j$}
\qdisk(2,2){1pt}\qdisk(2,-2){1pt}\qdisk(2,0){1pt}
\end{pspicture}
\end{center}
\caption{ The semi-circle $C_j$ used to evaluate the integral on the segment $L_{3,j}$.}\label{semicircle}
\end{figure}
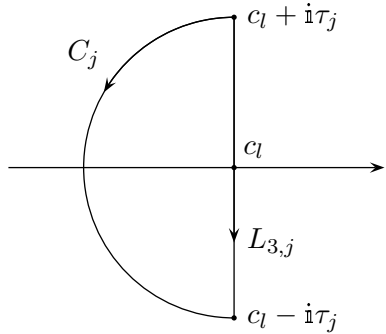

We can now write
\begin{eqnarray*}
\left| \int_{C_j} \dfrac{{\rm adj}(I-A(s))_{uv}}{\det(I-A(s))} \, \dfrac{g^{s-i}}{s-i}\, \varepsilon^{-s} \, ds \right|
&\leq&
\int_{C_j} \dfrac{C^\prime \, r_{{\rm min}}^{(N-1){\rm Re}(s)}}{\delta} \, \dfrac{g^{{\rm Re}(s)-i}}{|s-i|}\, \varepsilon^{-{\rm Re}(s)} \, | ds |\\
&\leq&
C^{\prime\prime} \, \int_{\frac{\pi}{2}}^{\frac{3\pi}{2}} \left(\dfrac{ r_{{\rm min}}^{N-1} \, g }{\varepsilon}\right)^{\tau_j \, \cos t}   \, \dfrac{\tau_j}{|c_l+\tau_j e^{\mathbbm{i}t} -i|} \, dt\\
&\leq&
C^{\prime\prime} \, \int_{\frac{\pi}{2}}^{\frac{3\pi}{2}} \left(\dfrac{ r_{{\rm min}}^{N-1} \, g }{\varepsilon}\right)^{\tau_j \, \cos t} \, dt
\end{eqnarray*}
since $\tau_j \leq |c_l+\tau_j e^{\mathbbm{i}t} -i|$. By the Jordan Lemma (which states that
\[\lim_{n\to\infty} \int_{\frac{\pi}{2}}^{ \frac{3\pi}{2}} a^{n
\cos t } dt=0,\] for any fixed $a>1$) this integral tends to zero if  \[ \dfrac{ r_{{\rm min}}^{N-1} \, g }{\varepsilon}>1.\]

So we come to the conclusion that our tube formula is valid pointwise for \linebreak \mbox{$\displaystyle \varepsilon < r_{{\rm min}}^{N-1} \, \min_{u\in V} \left\{  g_u \right\}$} for monophase generators. In the pluriphase case, one should take \mbox{$\displaystyle \varepsilon < r_{{\rm min}}^{N-1} \, \min_{u\in V} \left\{ g_{1,u} \right\}$}. Needless to say, these are only sufficient bounds.

\begin{remark}
There is an unfortunate misprint in \cite[p. 159, line 12]{DKOUJofG}, where $\varepsilon<g$ should be $\varepsilon<g_1$.
\end{remark}

\end{document}